\theoremstyle{plain}
\newtheorem{theorem}{Theorem}[section]
\newtheorem*{Peano}{Theorem}{\bf}{\it}
\newtheorem{corollary}[theorem]{Corollary}
\newtheorem{lemma}[theorem]{Lemma}
\newtheorem{proposition}[theorem]{Proposition}
\newtheorem{example}[theorem]{Example}
\theoremstyle{remark}
\newtheorem{remark}{Remark}[section]
\theoremstyle{definition}
\newtheorem{definition}{Definition}[section]
\newcommand{\R}{\mathbb R}
\newcommand{\RR}{\overline{\mathbb R}}
\newcommand{\E}{\mathbb E}
\newcommand{\ld}[3]{#2^{(#1)}_{-} #3}
\newcommand{\lsubd}[2]{\mathrm{\partial}^{(#1)}_{-} #2}
\newcommand{\subd}[2]{{\rm \partial} #1 #2}
\newcommand{\gin}[3]{#2^{[#1]}_{-} #3}
\newcommand{\dini}[2]{#2^{(#1)}_D}
\newcommand{\pr}{\prime}
\newcommand{\eps}{\varepsilon}
\newcommand{\f}{f : \E\to\R\cup\{+\infty\}}
\newcommand{\norm}[1]{\Vert#1\Vert}
\newcommand{\dom}[1]{{\rm dom}\,#1}
\newcommand{\be}{\begin{equation}}
\newcommand{\ee}{\end{equation}}
\title{Higher-order optimality conditions with an arbitrary non-differentiable function}
\author{Vsevolod I. Ivanov\thanks{Email: vsevolodivanov@yahoo.com
\vspace{6pt} }
\\\vspace{6pt}{\em{\small Department of Mathematics, Technical University of Varna, 9010 Varna, Bulgaria} }
}
\begin{document}
\maketitle
\begin{abstract}
In this paper, we introduce a new higher-order directional derivative and higher-order subdifferential of Hadamard type of a given proper extended real function. This derivative is harmonized with the classical higher-order Fr\'echet directional derivative in the sense that both derivatives of the same order coincide if the last one exists.  We obtain necessary and sufficient conditions of order $n$ ($n$ is a positive integer) for a local minimum and isolated local minimum of order $n$ of the given function in terms of these derivatives and subdifferentials. We do not require any restrictions on the function in our results. A notion of a higher-order critical direction is introduced. It is applied in the characterizations of the isolated local minimum of order $n$. 
Higher-order invex functions are defined. They are the largest class such that our necessary conditions for local minima are sufficient for global one. We compare our results with some previous ones. 

As an application, we improve a result due to V. F. Demyanov, showing that the condition introduced by this author is a complete characterization of isolated local minimizers of order $n$.
\end{abstract}

{\bf Keywords:}
nonsmooth optimization; necessary and sufficient conditions for  optimality;  higher-order directional derivatives of Hadamard type; higher-order subdifferentials;  generalized convex functions

{\bf AMS subject classifications:} 90C46; 49K10; 26B05; 26B25

\section{Introduction}
\label{s1}

One of the main tasks of non-differentiable optimization is to extend some optimality conditions to more general classes of non-differentiable functions.  There exist several types of minimizers. In this work, we will pay our attention to the local minimizers, global minimizers and to the isolated local ones in unconstrained problems. 

There are necessary and sufficient conditions in unconstrained optimization  in terms of various generalized derivatives. The most of them are of first- and second-order. The  higher-order conditions are rather limited. 
Such results were obtained in \cite{aub90,dem00,gin02,ban01,hof78,lin82,pal91,sad94,stu86,war94}. 
Even the conditions of first- and second-order  are satisfied for restricted classes of functions when are applied the known directional derivatives: locally Lipschitz, continuously differentiable, lower semicontinuous, the class C$^{1,1}$, and so on. 
The reader can find a lot of information concerning various types of higher-order derivatives, applied in mathematics, from the book \cite{muk12}.

The following properties are desirable when we compare the generalized derivatives to evaluate the optimality conditions:

1) The optimality conditions are necessary and sufficient.

2) The derivative extends the classical Fr\'echet derivative, that is the directional derivative coincides with the  Fr\'echet directional derivative in the case when the last one exists.

3) The derivative is defined for an arbitrary function, not restricted to some class of functions.

4) Higher-order derivatives can be defined and they satisfy Conditions 1, 2, and 3.

5) Useful calculus rules are derived in terms of these derivatives.

In Refs. \cite{hof78,lin82,sad94} were obtained only necessary optimality conditions, and there are no sufficient ones in these works.
The sufficient conditions in \cite{aub90} concern the so called weak local minimizers which are not local minimizers. The lower Dini directional derivatives are often applied in optimization, but even the second-order sufficient conditions in terms of Dini derivatives need more assumptions like the function should be $l$-stable  \cite{pas08}. 

The derivatives in \cite{aub90,dem00,gin02,stu86,war94} do not extend  the classical Fr\'echet derivatives. For example, even for every twice Fr\'echet differentiable function the second-order directional derivative $\gin 2 f(x;u)$ of the function $f$ at the point $x$ in direction $u$ in \cite{gin02} does not coincide with the second-order Fr\'echet directional derivative. It is shown in \cite{NA} that in this case $\gin 2 f(x;u)=-\infty$ for every direction $u$, if $\nabla f(x)\ne 0$.   

The higher-order derivatives in \cite{ban01} are consistent with the classical Fr\'echet directional derivatives, but the sufficient conditions are very restrictive. Even the second-order conditions can be applied only for C$^{1,1}$ functions.

In this paper, we introduce a new generalized directional derivative of order $n$ ($n$ is a positive integer) which satisfies Properties 1, 2, 3, and 4. In our knowledge, there is no a derivative which fulfills all these properties. The second-order derivative, when the function is lower semicontinuous, coincides with the second-order epi-derivative due to Rockafellar \cite{roc88}. The necessary conditions for optimality and the sufficient ones hold for an arbitrary function not necessarily restricted to some class. We obtain necessary conditions for a local minimum, sufficient ones for a strict local minimum, and complete characterizations of isolated local minimizers of order $n$ ($n$ is a positive integer) in terms of this derivative.  We derive our criteria for arbitrary proper extended real functions. The convergence in the definition of the derivatives  is of Hadamard type. We introduce a subdifferential of order $n$ and apply it in the optimality criteria. We should mention that optimality conditions for isolated local minima of order $n$ were obtained by Studniarski \cite{stu86}, but in contrast of our derivative, the derivatives of lower order do not appear in the formulation of the derivative of order $n$. We  compare our conditions for isolated local minimum of order $n$ with Theorem 2.1 in \cite{stu86}. We introduce a notion of a critical direction of order $m$ ($m$ is positive integer) and apply it in the characterizations of the isolated local minima of order $n$. Therefore our Theorem \ref{th2} is quite different from \cite[Theorem 2.1]{stu86}. We continue the investigation with Theorem \ref{th-Z}.
We additionally prove conditions of order $n$, which are both necessary and sufficient for a given point to be a global minimizer. They concern a new class of invex functions of order $n$. It is an extension of the respective notion in \cite{applanal-1,applanal-2} 
At last, we compare our optimality conditions with some known results (see Propositions \ref{pr5} and \ref{pr6}). 

We also improve the main result from Ref. \cite{dem00} due to V. F. Demyanov. Demyanov introduced the derivative $f^\downarrow_n(x)$ of order $n$ of the function $f$ at the point $x$. He proved that the condition $f^\downarrow_n(x)>0$ implies that $x$ is a strict local minimizer. Such condition is not sensible to locate strict minimizers, because the first-order derivative is enough to find them. We show that this condition is a complete characterization of isolated local minimizers of order $n$. Now, we can differentiate various types of strict minimizers.

In the work by Ivanov \cite{NA} are obtained second-order optimality conditions for scalar and vector problems using the particular case when the order of the derivative is second. The epi-derivative due to Rockafellar \cite{roc88} is defined for lower semicontinuous function. The derivative in \cite{NA} extends the epi-derivative due to Rockafellar to an arbitrary function, which does not belong to some restricted class of functions. It is proved that the optimality conditions for unconstrained problems in the papers \cite{bp04,pas08,bt85,cha94,cha87,cc90,ggr06,husn84,hua94,jl98,roc88,roc89,yan96,yan99,yj92} are consequence of the optimality conditions in terms of the derivative in \cite{NA}.

\section{Higher-order directional derivatives and subdifferentials of Ha\-da\-mard type}
\label{s2}

In this paper, we suppose that $\E$ is a real finite-dimensional Euclidean space. Denote by $\R$ the set of reals and $\RR=\R\cup\{-\infty\}\cup\{+\infty\}$.
Let $X$ and $Y$ be two linear spaces and $L(X,Y)$ be the space of all continuous linear operators from $X$ to $Y$. Then denote by $L^1(\E)$ the space $L(\E,\R)$, by $L^2(\E)$ the space $L(\E,L^1(\E))$ and so on. If $n$ is an arbitrary positive integer such that $n>1$, let $L^n(\E)$ be the linear space $L(\E,L^{n-1}(\E))$.
 
\begin{definition}[\cite{atf87}]
Let $X$ and $Y$ be real normed spaces. Consider a map $F:X\to Y$. If there exists a neighborhood of the point $x\in X$ such that the map $F$ can be represented in the form
\[
F(x+u)=F(x)+\Lambda(u)+\alpha(u)\norm{u},
\]
where $\Lambda\in L(X,Y)$ and $\lim_{\norm{u}\to 0}\norm{\alpha(u)}=0$, then $\Lambda$ is called the Fr\'echet derivative of $F$ at $x$.
\end{definition}

\begin{definition}[\cite{atf87}]
Let $f:\E\to\R$ be a  Fr\'echet differentiable function, whose Fr\'echet derivative at the point $x$ is denoted by $\nabla f(x)$. Then the second-order Fr\'echet derivative is the map $\nabla^2 f$ such that $\nabla^2 f\in L^2(\E)$ and $\nabla^2 f(x)$ is the 
Fr\'echet derivative of $\nabla f(x)$. The higher-order Fr\'echet derivatives are defined recursively. Let $\nabla^{n-1}(x)$ be the derivative of order $(n-1)$ at $x$. Then $\nabla^n f\in L^n(\E)$ and $\nabla^n f(x):=\nabla (\nabla^{n-1}f(x))$.  
\end{definition}

\begin{Peano}[Taylor's formula with a reminder in the form of Peano, \cite{atf87}]
Let $f:\E\to\R$ be a multivariable function. Suppose that there exist the Fr\'echet derivatives 
\[
\nabla f(x),\; \nabla^2 f(x),\dots,\nabla^{n-1} f(x)
\]
for every $x$ from some neighborhood of the point $\hat x$ and there exists $\nabla^n f(\hat x)$. Then
\[
f(\hat x+u)=f(\hat x)+\sum_{k=1}^n\frac{1}{k!}\nabla^k f(\hat x)\underbrace{(u)(u)\dots (u)}_{k-\text{times}}+\alpha_n(\norm{u})
\norm{u}^n.
\]
where $\alpha_n$ is a such function that $\lim_{\norm{u}\to 0}\alpha_n(\norm{u})=0$.
\end{Peano}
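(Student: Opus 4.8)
The plan is to proceed by induction on $n$. For $n=1$ the stated formula is nothing but the definition of the Fr\'echet derivative $\nabla f(\hat x)$: one takes $\alpha_1(\norm u)\norm u:=\alpha(u)\norm u$ in the notation of the Fr\'echet-derivative definition above. So I assume $n\ge 2$ and that the formula is already known, at the order $n-1$, for maps between finite-dimensional spaces fulfilling the corresponding hypotheses; the vector-valued version of that statement is obtained from the scalar one by fixing a basis of the target space and applying the scalar result to each coordinate.

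Introduce the remainder
\[
R_n(u):=f(\hat x+u)-f(\hat x)-\sum_{k=1}^n\frac{1}{k!}\,\nabla^k f(\hat x)\underbrace{(u)(u)\cdots(u)}_{k\text{-times}} .
\]
Because $\nabla f$ exists throughout a neighborhood of $\hat x$, the map $u\mapsto R_n(u)$ is Fr\'echet differentiable on a neighborhood of the origin and $R_n(0)=0$. Moreover each $\nabla^k f(\hat x)$ with $k\le n$ is a symmetric $k$-linear form, since $\nabla^{k-1}f$ exists near $\hat x$ and $\nabla^k f(\hat x)$ exists; hence $u\mapsto\nabla^k f(\hat x)(u)\cdots(u)$ has Fr\'echet derivative $h\mapsto k\,\nabla^k f(\hat x)(u)\cdots(u)(h)$. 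Differentiating $R_n$ term by term and shifting the summation index $k\mapsto j=k-1$, I arrive at
\[
\nabla_u R_n(u)=\nabla f(\hat x+u)-\sum_{j=0}^{n-1}\frac{1}{j!}\,\nabla^{j+1}f(\hat x)\underbrace{(u)\cdots(u)}_{j\text{-times}},
\]
and the right-hand side is precisely the Taylor remainder of order $n-1$ of the map $g:=\nabla f:\E\to L^1(\E)$ at the point $\hat x$, since $\nabla g=\nabla^2 f,\dots,\nabla^{n-1}g=\nabla^n f$, so that $g$ satisfies the hypotheses of the theorem with $n$ replaced by $n-1$. Applying the induction hypothesis to $g$ gives $\norm{\nabla_u R_n(u)}=o(\norm u^{\,n-1})$ as $\norm u\to 0$.

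It then remains to integrate this estimate up. Given $\eps>0$, choose $\delta>0$ with $\norm{\nabla_v R_n(v)}\le\eps\,\norm v^{\,n-1}$ for all $v$ with $\norm v\le\delta$. For $0<\norm u\le\delta$ the whole segment $tu$, $t\in[0,1]$, stays in that ball, and the mean value inequality applied to the real function $t\mapsto R_n(tu)$ yields
\[
|R_n(u)|=|R_n(u)-R_n(0)|\le\norm u\,\sup_{0\le t\le 1}\norm{\nabla_u R_n(tu)}\le\eps\,\norm u^{\,n}.
\]
Hence $R_n(u)=o(\norm u^{\,n})$; writing $R_n(u)=\alpha_n(\norm u)\norm u^{\,n}$ with $\alpha_n(\norm u):=R_n(u)/\norm u^{\,n}$ for $u\ne 0$ and $\alpha_n(0):=0$ (the remainder being expressed, as is customary for this form, through a quantity that depends on $u$ and tends to $0$ with $\norm u$) gives $\lim_{\norm u\to 0}\alpha_n(\norm u)=0$, which is the asserted expansion. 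The one genuinely delicate point is the differentiation of the Taylor polynomial, which is why the symmetry of the higher Fr\'echet derivatives is invoked; once $\nabla_u R_n$ is recognized as the lower-order remainder of $\nabla f$, the induction closes routinely, the auxiliary operator-valued Taylor formula being harmless thanks to the finite dimensionality of $L^1(\E)$.
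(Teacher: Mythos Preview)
The paper does not supply a proof of this theorem: it is quoted as a known result with a reference to \cite{atf87}, and is only used as a tool in the proof of Theorem~\ref{pr3}. So there is no ``paper's own proof'' to compare against.

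Your argument is the standard inductive proof of the Peano form of Taylor's theorem and is correct. The key steps --- differentiating the remainder $R_n$ term by term using the symmetry of the higher Fr\'echet derivatives, recognizing $\nabla_u R_n(u)$ as the order-$(n-1)$ Taylor remainder of $g=\nabla f$, invoking the induction hypothesis for the $L^1(\E)$-valued map $g$ (legitimate because $L^1(\E)$ is finite-dimensional), and then recovering $R_n(u)=o(\norm u^{\,n})$ via the mean value inequality on the segment $[0,1]\ni t\mapsto R_n(tu)$ --- are all sound. Your parenthetical remark that $\alpha_n$ actually depends on $u$ rather than on $\norm u$ alone is also to the point; the statement's notation $\alpha_n(\norm u)$ is a common abuse meaning only that the factor tends to $0$ as $\norm u\to 0$.
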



Consider a proper extended real function $\f$, that is a function, which never takes the value $-\infty$ and at least one value is finite. The domain of a proper extended real function is the set:
\[
{\rm dom}\; f:=\{x\in\E\mid f(x)<+\infty\}.
\]

\begin{definition}\label{def1}
The lower Hadamard directional derivative of a function $\f$
at a point $x\in\dom f$ in direction $u\in\E$ is defined as follows:
\[
\ld 1 f(x;u)=\liminf_{t\downarrow 0,u^\pr\to u}\,
t^{-1}[f(x+t u^\pr)-f(x)].
\]
Here $t$ tends to 0 with positive values, and $u^\pr\to u$ implies that the norm $\norm{u^\pr-u}$ approaches $0$.
\end{definition}

\begin{definition}
Recall that the lower Hadamard subdifferential of a function $\f$ at some point
$x\in\dom f$ is defined  by the following relation:
\[
\lsubd 1 f(x)=\{ x^*\in L^1(\E)\mid x^*(u)\le\ld 1 f (x;u)\quad\textrm{for all directions}\quad u\in\E\}.
\]
\end{definition}

The following definition extends the epi-derivative due to Rockafellar \cite{roc88}, defined for lower semicontinuous functions, to an arbitrary function (see \cite{NA}):

\begin{definition}
Let $\f$ be an arbitrary proper extended real function.
Suppose that $x^*_1$ is a fixed element from the lower Hadamard subdifferential
$\lsubd 1 f (x)$ at the point $x\in\dom f$. Then the lower
second-order derivative of Hadamard type of $f$ at $x\in\dom f$ in direction $u\in\E$ is
defined as follows:
\[
\ld 2 f(x;x^*_1;u)=\liminf_{t\downarrow 0,u^\pr\to u}\,
2t^{-2}[f(x+t u^\pr)-f(x)-tx^*_1(u^\pr)].
\]
\end{definition}


\begin{definition}
Let $\f$ be an arbitrary proper extended real function. Suppose that $x\in\dom f$, $x^*_1\in\lsubd 1 f(x)$.
The lower second-order Hadamard subdifferential of the function $\f$ at the point
$x\in\dom f$ is defined  by the following relation:
\[
\lsubd 2 f(x;x^*_1)=\{ x^*\in L^2(\E)\mid x^*(u)(u)\le\ld 2 f (x;x^*_1;u)\quad\textrm{for all directions}\quad u\in\E\}.
\]
\end{definition}

We introduce the following definitions:

\begin{definition}\label{def3}
Let $\f$ be an arbitrary proper extended real function, and $n$ be any positive integer such that $n>1$. Suppose that the lower Hadamard subdifferential 
\[
\lsubd i f (x;x_1^*,x_2^*,\dots,x_{i-1}^*),\quad i=1,2,\dots, n-1
\]
of order $i$ at the point $x\in\dom f$ is nonempty and $x^*_i$ is a fixed point from it. 
Then the lower derivative of Hadamard type  of order $n$ of $f$ at $x\in\dom f$ in direction $u\in\E$ is defined as follows:
\[
\ld n f(x;x^*_1,x^*_2,\dots,x^*_{n-1};u)=\liminf_{t\downarrow 0,u^\pr\to u}\,\Delta_n,
\]
where
\[
\Delta_n=
n!\, t^{-n}\, [f(x+t u^\pr)-f(x)-\sum_{i=1}^{n-1}\frac{t^i}{i!}\, x^*_i\underbrace{(u^\pr)(u^\pr)\dots (u^\pr)}_{i-\text{times}}].
\]
This derivative is well defined as element of $\bar\R$, because only the term $f(x+t u^\pr)$ can be infinite in the expression for $\Delta_n$.
\end{definition}

\begin{definition}\label{def4}
Suppose that $\f$ is an arbitrary proper extended function, and $n$ is any positive integer.
Let $x^*_i$ be a fixed point from the lower Hadamard subdifferential
$\lsubd i f (x;x_1^*,x_2^*,\dots,x_{i-1}^*)$, $i=1,2,\dots, n-1$ of order $i$ at the point $x\in\dom f$. Then the lower
subdifferential of Hadamard type  of order $n$ of $f$ at $x\in\dom f$  is defined as follows:
\begin{gather*}
\lsubd n f(x;x^*_1,x^*_2,\dots,x^*_{n-1})=\{ x^*\in L^n(\E)\mid x^*\underbrace{(u)(u)\dots (u)}_{n-\text{times}} \\
\le\ld n f (x;x^*_1,x^*_2,\dots,x^*_{n-1};u),\;\forall u\in\E\}.
\end{gather*}
\end{definition}

The essence of the next result  is that the  derivatives, defined in Definition \ref{def3},
generalize the usual classical ones in contrast to the derivative in \cite{stu86} and a lot of other derivatives.

\begin{theorem}\label{pr3}
Let the function $\f$ have Fr\'echet derivatives
\[
\nabla f(y),\nabla^2 f(y),\dots,\nabla^{n-1} f(y)
\]
at each point $y\in\E$ from some neighborhood of the point $x\in\E$, and let
there exist the n-th order Fr\'echet derivative $\nabla^n f(x).$
Then the lower derivatives of order $m$ in every direction $u\in\E$ exist for every integer $m$ such that $1\le m\le n$ and we have the following relations:
\begin{gather}
\ld 1 f(x;u)=\nabla f(x)(u);\quad \lsubd 1 f(x)=\{\nabla f(x)\}; \notag \\
\ld m f(x;\nabla f(x),\nabla^2 f(x),\dots,\nabla^{m-1} f(x);u)=\nabla^m f(x)\underbrace{(u)\dots (u)}_{m-\text{times}},  m=2,3,\dots, n; \notag \\
\nabla^m f(x)\in\lsubd m f(x;\nabla f(x),\nabla^2 f(x),\dots,\nabla^{m-1} f(x)) ,\quad m=2,3,\dots, n. \label{28}
\end{gather}
\end{theorem}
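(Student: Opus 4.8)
The plan is to argue by induction on the order $m$, using Taylor's formula with the Peano remainder (the theorem stated above) as the main tool; the pivotal observation will be that in each case the Hadamard $\liminf$ is in fact a genuine limit, and is therefore forced to equal the classical value. For the base case $m=1$, Fr\'echet differentiability of $f$ at $x$ gives $f(x+tu')-f(x)=t\,\nabla f(x)(u')+\alpha(tu')\,\norm{tu'}$ with $\norm{\alpha(v)}\to 0$ as $\norm{v}\to 0$; dividing by $t>0$ and letting $t\downarrow 0$, $u'\to u$, the remainder $\alpha(tu')\,\norm{u'}$ vanishes because along any admissible net $\norm{u'}$ is eventually bounded, so $t\norm{u'}\to 0$, while $\nabla f(x)(u')\to\nabla f(x)(u)$ by continuity of $\nabla f(x)$; hence the limit exists and $\ld 1 f(x;u)=\nabla f(x)(u)$. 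For the subdifferential, $x^*\in\lsubd 1 f(x)$ forces $x^*(u)\le\nabla f(x)(u)$ for every $u$, hence, replacing $u$ by $-u$, $x^*=\nabla f(x)$, so $\lsubd 1 f(x)=\{\nabla f(x)\}$; in particular this set is nonempty, so the order-$2$ derivative with $x_1^*=\nabla f(x)$ is well defined.

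For the inductive step I would fix $m$ with $2\le m\le n$ and assume that for every $i<m$ the subdifferential $\lsubd i f(x;\nabla f(x),\dots,\nabla^{i-1}f(x))$ contains $\nabla^i f(x)$ — so it is nonempty and $x_i^*:=\nabla^i f(x)$ is an admissible choice in Definitions \ref{def3}--\ref{def4} — and that $\ld i f(x;\nabla f(x),\dots,\nabla^{i-1}f(x);u)=\nabla^i f(x)\underbrace{(u)\dots(u)}_{i-\text{times}}$. Since $m\le n$, the function $f$ possesses Fr\'echet derivatives through order $m-1$ on a neighborhood of $x$ and $\nabla^m f(x)$ exists (if $m<n$ it even exists on a neighborhood), so Taylor's formula of order $m$ is available. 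Substituting $v=tu'$ into it and then subtracting $f(x)$ together with $\sum_{i=1}^{m-1}\tfrac{t^i}{i!}\nabla^i f(x)(u')\dots(u')$, the first $m-1$ Taylor terms cancel exactly and one is left with
\[
\Delta_m=\nabla^m f(x)\underbrace{(u')\dots(u')}_{m-\text{times}}+m!\,\alpha_m(t\norm{u'})\,\norm{u'}^m,\qquad \alpha_m(s)\to 0\ \text{as}\ s\downarrow 0 .
\]

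Passing to the limit $t\downarrow 0$, $u'\to u$: the first term tends to $\nabla^m f(x)(u)\dots(u)$ since $\nabla^m f(x)$ is a continuous $m$-linear form, the factor $\norm{u'}^m$ stays bounded, and $t\norm{u'}\to 0$ so $\alpha_m(t\norm{u'})\to 0$; hence $\Delta_m$ has the genuine limit $\nabla^m f(x)(u)\dots(u)$ and its $\liminf$, which by definition is $\ld m f(x;\nabla f(x),\dots,\nabla^{m-1}f(x);u)$, equals it — this is the middle family of identities. Finally, the defining inequality of $\lsubd m f$ evaluated at $x^*=\nabla^m f(x)$ reads $\nabla^m f(x)(u)\dots(u)\le\ld m f(x;\dots;u)$, which we have just shown to hold with equality, so $\nabla^m f(x)\in\lsubd m f(x;\nabla f(x),\dots,\nabla^{m-1}f(x))$; this is precisely \eqref{28}, and it also closes the induction, the nonemptiness of this subdifferential making the order-$(m+1)$ derivative well defined with $x_m^*=\nabla^m f(x)$.

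I do not expect a genuine obstacle here: the mathematical content is routine, and the only points that need care are bookkeeping. One must check at each step that the Peano--Taylor theorem really applies, which it does since $m-1\le n-1$; and one must make sure the remainder estimate is uniform enough in the Hadamard sense, which is immediate because along any admissible net $u'\to u$ the norms $\norm{u'}$ are eventually bounded, whence $t\norm{u'}\to 0$. It is also worth recording explicitly that the hypotheses force $f$ to be finite on a whole neighborhood of $x$ (a Fr\'echet derivative only makes sense where $f$ is real-valued), so that the value $+\infty$ never interferes and every manipulation above is the classical one.
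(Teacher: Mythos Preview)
Your proof is correct and follows essentially the same route as the paper: induction on the order $m$, with Taylor's formula in the Peano form doing the work at each step to reduce the Hadamard $\liminf$ of $\Delta_m$ to the genuine limit $\nabla^m f(x)(u)\dots(u)$. Your write-up is simply more explicit than the paper's about the bookkeeping (boundedness of $\norm{u'}$, finiteness of $f$ near $x$, the argument that $\lsubd 1 f(x)$ is exactly $\{\nabla f(x)\}$), but there is no substantive difference in strategy.
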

\begin{proof}
The first-order relations are well known, because they concern the Hadamard directional derivative.

We prove by induction the relations of order $m>1$. Suppose that they are satisfied for every positive integer $k<m$. It follows from here that 
\[
\ld m f(x;\nabla f(x),\nabla^2 f(x),\dots,\nabla^{m-1} f(x);u)
\]
is well defined. By Taylor's expansion formula with a reminder in the form of Peano we have
\[
f(x+t u^\pr)=f(x)+\sum_{i=1}^m\,\frac{1}{i!}\, \nabla^i f(x)\underbrace{(tu^\pr)(tu^\pr)\dots (tu^\pr)}_{i-\text{times}}]+o(t^m),
\]
where $o(h)$ is a function such that $\lim_{h\to 0}\, o(h)/h=0$.
Then we conclude from Definition \ref{def3} that
\begin{gather*}
\ld m f(x;\nabla f(x),\nabla^2 f(x),\dots,\nabla^{m-1}f(x);u) \\
=\liminf_{t\downarrow 0,u^\pr\to u}\,[\nabla^m f(x)\underbrace{(u^\pr)(u^\pr)\dots (u^\pr)}_{m-\text{times}}+o(t^m)/t^m]=\nabla^m f(x)\underbrace{(u)(u)\dots (u)}_{m-\text{times}}.
\end{gather*}
By Definition \ref{def4} we obtain that Inclusions (\ref{28}) are satisfied.
\end{proof}

\section{Conditions for a local minimum}
\label{s3}
The following theorem contains necessary conditions for a local minimum.
\begin{theorem}\label{th1}
Let $\bar x\in\dom f $ be a local minimizer of the proper extended real function $\f$. Then
\begin{equation}\label{14}
0\in\lsubd 1 f(\bar x),\quad 0\in\lsubd n f(\bar x;\underbrace{0,0,\dots ,0)}_{(n-1)-times}\quad\text{for all}\quad n=2,3,4,\dots
\end{equation}
\end{theorem}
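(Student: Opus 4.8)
The plan is to argue by induction on $n$; the only real content is that the sign of the quantity $\Delta_n$ from Definition \ref{def3} is controlled directly by local minimality once all the correction functionals are taken to be zero.

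For the base case $n=1$, I would unwind Definition \ref{def1}. Since $\bar x$ is a local minimizer, there is a ball $B$ centered at $\bar x$ on which $f\ge f(\bar x)$. For $t$ small enough and $u^\pr$ close enough to $u$ the point $\bar x+tu^\pr$ lies in $B$, because $\norm{u^\pr}$ stays bounded near $\norm{u}$ and hence $\norm{tu^\pr}\to 0$; therefore $t^{-1}[f(\bar x+tu^\pr)-f(\bar x)]\ge 0$ for all such $t,u^\pr$, and passing to the lower limit gives $\ld 1 f(\bar x;u)\ge 0$ for every $u\in\E$. By the definition of the first-order Hadamard subdifferential this is exactly $0\in\lsubd 1 f(\bar x)$.

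For the inductive step, assume $0\in\lsubd i f(\bar x;0,\dots,0)$ for $i=1,\dots,n-1$; this is precisely the hypothesis needed for $\ld n f(\bar x;0,\dots,0;u)$ to be well defined in the sense of Definition \ref{def3}. With every $x^*_i=0$ the correction sum $\sum_{i=1}^{n-1}\frac{t^i}{i!}\,x^*_i(u^\pr)\dots(u^\pr)$ vanishes identically, so $\Delta_n=n!\,t^{-n}[f(\bar x+tu^\pr)-f(\bar x)]$. Exactly as in the base case, local minimality forces $f(\bar x+tu^\pr)-f(\bar x)\ge 0$ for $t$ small and $u^\pr$ near $u$, hence $\Delta_n\ge 0$ for all such $t,u^\pr$, and $\ld n f(\bar x;0,\dots,0;u)=\liminf_{t\downarrow 0,u^\pr\to u}\Delta_n\ge 0$ for every $u\in\E$. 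By Definition \ref{def4} this means $0\in\lsubd n f(\bar x;0,\dots,0)$, which closes the induction and simultaneously supplies the hypothesis needed at the next order.

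The argument is short precisely because the zero functionals annihilate all the Taylor-type correction terms; the only point demanding a little care is the bookkeeping ensuring that the order-$n$ derivative is meaningful, which is why the claim has to be established for all lower orders simultaneously by induction rather than for a single fixed $n$. I do not anticipate any genuine obstacle beyond this.
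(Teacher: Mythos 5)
Your proof is correct and follows essentially the same route as the paper: establish $\ld 1 f(\bar x;u)\ge 0$ directly from local minimality, then proceed by induction, noting that with all $x^*_i=0$ the correction terms vanish so $\Delta_n=n!\,t^{-n}[f(\bar x+tu^\pr)-f(\bar x)]\ge 0$ for small $t$ and $u^\pr$ near $u$. No gaps; the bookkeeping about well-definedness of the order-$n$ derivative is handled exactly as in the paper.
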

\begin{proof}
Since $\bar x$ is a local minimizer, then there exists a neighborhood $N\ni\bar x$ with $f(x)\ge f(\bar x)$ for all $x\in N$. Let $u\in\E$ be an arbitrary chosen direction. Then $f(\bar x+t u^\pr)\ge f(\bar x)$ for all sufficiently small positive numbers $t$ and for all directions $u^\pr$, which are  sufficiently close to $u$. It follows from Definition \ref{def1} that $\ld 1 f (\bar x;u)\ge 0$. Therefore $0\in\lsubd 1 f(\bar x)$, because $u\in\E$ is an arbitrary direction.

Let $n$ be an arbitrary positive integer and 
\[
0\in\lsubd i f(\bar x;\underbrace{0,0,\dots ,0)}_{(i-1)-times},\quad i=1,2,\dots,n-1
\]
Hence $\ld n f(\bar x;\underbrace{0,0,\dots ,0}_{(n-1)-times};u)$ has sense and
\[
\ld n f(\bar x;\underbrace{0,0,\dots ,0}_{(n-1)-times};u)=\liminf_{t\downarrow 0,u^\pr\to u,}\, n!\, t^{-n}
[f(\bar x+t u^\pr)-f(\bar x)]\ge 0,\quad\forall u\in\E.
\]
It follows from the definition of the lower subdifferential of order $n$ that
\[
0\in\lsubd n f(\bar x;\underbrace{0,0,\dots ,0)}_{(n-1)-times}. \qedhere
\]
\end{proof}

\begin{remark}
Condition (\ref{14}) is equivalent to the following one:
\begin{equation}\label{15}
\ld 1 f(\bar x;u)\ge 0,\quad\ld n f(\bar x;\underbrace{0,0,\dots ,0}_{(n-1)-times};u)\ge 0,
\end{equation}
for all $u\in\E$ for all positive integers  $n\ge 2$.

\end{remark}

We introduce the following definition:

\begin{definition}
We call every point $\bar x\in \dom f $ such that 
\[
\ld 1 f(\bar x;u)\ge 0,\quad\ld k f(\bar x;\underbrace{0,0,\dots ,0}_{(k-1)-times};u)\ge 0,\quad k=2,3,\dots,n\quad\textrm{ for all }u\in\E,
\]
a stationary of order $n$.
\end{definition}

The notion of a 1-stationary point coincides with the notion of a stationary point.

The following example shows that Condition (\ref{15}) is not sufficient for $\bar x$ to be a local minimizer:
\begin{example}\label{ex2}
Consider the function of one variable $f:\R\to\R$ defined by:
\[
f(x)=\left\{
\begin{array}{ll}
-\exp\,(-1/x^2), \quad & \textrm{if}\quad x\ne 0, \\
0, & \textrm{if}\quad x=0.
\end{array}\right.
\]
Let us take $\bar x=0$. Then we have
\[
\ld n f(\bar x;\underbrace{0,0,\dots ,0}_{(n-1)-times};u)=0\quad\textrm{ for all }u\in\R,\;\textrm{ for all positive integers } n,
\]
\[
\lsubd n f(\bar x;\underbrace{0,0,\dots ,0)}_{(n-1)-times}=\{0\}\;\textrm{if }n\textrm{ is odd,}\quad
\lsubd n f(\bar x;\underbrace{0,0,\dots ,0)}_{(n-1)-times}=(-\infty,0]\;\textrm{if }n\textrm{ is even.}
\]
Hence Condition {\rm (\ref{15})} is satisfied, but $\bar x$ is not a local minimizer. Really, it is a global maximizer.
\end{example}

A point $\bar x\in\dom f$ is said to be a strict local minimizer iff there exists a neighborhood $N\ni\bar x$ such that $f(x)>f(\bar x)$ for all $x\in N$ such that $x\ne\bar x$.  

The following conditions are sufficient  for strict local minimum:
\begin{theorem}\label{th4}
Let be given a proper extended real function $\f$ and a point $\bar x\in\dom f$. Suppose that for every direction $u\in\E$, $u\ne 0$ we have $\ld 1 f(\bar x;u)>0$,
or there exists a positive integer $n=n(u)$, $n\ge 2$, which depend on $u$, and such that the following conditions hold:
\begin{equation}\label{4}
0\in\lsubd 1 f(\bar x),\quad 0\in\lsubd i f(\bar x;\underbrace{0,0,\dots ,0)}_{(i-1)-times}\quad\text{for all}\quad i=1,2,\dots, n-1
\end{equation}
and
\begin{equation}\label{5}
\ld {n(u)} f(\bar x;\underbrace{0,0,\dots ,0}_{(n-1)-times};u)>0.
\end{equation}
Then $\bar x$ is a strict local minimizer.
\end{theorem}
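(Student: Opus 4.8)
The plan is to argue by contradiction. Suppose $\bar x$ is not a strict local minimizer. Then there exists a sequence $x_k \to \bar x$ with $x_k \ne \bar x$ and $f(x_k) \le f(\bar x)$ for all $k$. Write $x_k = \bar x + t_k u_k$ where $t_k = \norm{x_k - \bar x} \downarrow 0$ and $u_k = (x_k - \bar x)/t_k$, so $\norm{u_k} = 1$. Since the unit sphere in the finite-dimensional space $\E$ is compact, after passing to a subsequence we may assume $u_k \to u$ for some $u \in \E$ with $\norm{u} = 1$, in particular $u \ne 0$. This is the standard Hadamard-type extraction, and the whole point of using Hadamard (rather than Dini) derivatives is that the varying direction $u_k \to u$ is exactly what the $\liminf$ in Definition \ref{def1} and Definition \ref{def3} ranges over.

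Next I would split according to the hypothesis applied to this particular $u$. If $\ld 1 f(\bar x;u) > 0$, then from $f(\bar x + t_k u_k) - f(\bar x) \le 0$ we get $t_k^{-1}[f(\bar x + t_k u_k) - f(\bar x)] \le 0$, and taking $\liminf$ along $t_k \downarrow 0$, $u_k \to u$ yields $\ld 1 f(\bar x;u) \le 0$, a contradiction. Otherwise, there is an integer $n = n(u) \ge 2$ for which \eqref{4} and \eqref{5} hold. Here \eqref{4} guarantees that all the lower subdifferentials of orders $1,\dots,n-1$ at $\bar x$ (evaluated at the zero functionals) contain $0$, so the derivative $\ld{n} f(\bar x;0,\dots,0;u)$ is well defined in the sense of Definition \ref{def3}, with all the correction terms $x_i^* = 0$ vanishing. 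Then $\Delta_n$ along our sequence reduces to $n!\, t_k^{-n}[f(\bar x + t_k u_k) - f(\bar x)] \le 0$, and taking the $\liminf$ over $t_k \downarrow 0$, $u_k \to u$ gives
\[
\ld n f(\bar x;\underbrace{0,0,\dots,0}_{(n-1)-times};u) \le 0,
\]
contradicting \eqref{5}. In either case we reach a contradiction, so $\bar x$ must be a strict local minimizer.

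The one point that needs care — and which I expect to be the main obstacle — is the bookkeeping that makes $\ld n f(\bar x;0,\dots,0;u)$ meaningful: Definition \ref{def3} only defines the order-$n$ derivative when the lower subdifferentials of all lower orders are nonempty and specific elements $x_i^*$ have been selected. Condition \eqref{4} is precisely what supplies the admissible choice $x_i^* = 0$ for $i = 1,\dots,n-1$, and one should note that with this choice every term $\frac{t^i}{i!} x_i^*(u')\cdots(u')$ in $\Delta_n$ is identically zero, so $\Delta_n = n!\, t^{-n}[f(\bar x + t u') - f(\bar x)]$ exactly as used above. A secondary subtlety is that $n(u)$ depends on $u$, but since in the contradiction argument we fix the single limiting direction $u$ obtained from the extracted subsequence, only one value of $n$ is ever invoked, so no uniformity in $u$ is required. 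The rest is routine manipulation of $\liminf$.
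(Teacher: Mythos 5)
Your argument is correct, but it is organized differently from the paper's. The paper proves the statement directly: for each fixed $u$ in the unit sphere $S$ it converts the strict positivity of the relevant derivative into a quantitative lower bound $f(\bar x+tu^\pr)\ge f(\bar x)+\alpha t^n/n!$ valid for $t\in(0,\delta)$ and $\norm{u^\pr-u}<\eps$, then covers $S$ by finitely many such neighborhoods and takes the minimum of the resulting radii $\delta_i$ to get a single ball on which $f>f(\bar x)$. You instead argue by contradiction: you take a sequence $x_k\to\bar x$ with $f(x_k)\le f(\bar x)$, normalize to get directions $u_k$ on $S$, extract a convergent subsequence $u_k\to u$, and use the fact that the Hadamard $\liminf$ is bounded above by the $\liminf$ along this particular sequence $(t_k,u_k)$ to contradict either $\ld 1 f(\bar x;u)>0$ or $\ld{n(u)} f(\bar x;0,\dots,0;u)>0$. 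Both routes exploit compactness of the unit sphere in the finite-dimensional space $\E$ — yours via sequential compactness, the paper's via a finite subcover — and both correctly rely on condition (\ref{4}) to make the order-$n$ derivative well defined with all correction terms zero. Your version is shorter and is in fact the same technique the paper uses for the implication c) $\Rightarrow$ a) in Theorem \ref{th2}; the paper's covering argument is slightly more informative in that it produces an explicit uniform radius $\bar\delta$ (and, on each covering piece, an explicit growth rate $\alpha t^{n}/n!$), which is the kind of quantitative control one would need if one wanted an isolated-minimizer conclusion rather than just strictness. As written, your proof is complete and valid.
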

\begin{proof}
Let $u\ne 0$ be an arbitrary direction. It follows from (\ref{4}) and (\ref{5}) that there exists $\alpha>0$ with
\[
\liminf_{t\downarrow 0,u^\pr\to u}\,n!\,t^{-n}[f(\bar x+t u^\pr)-f(\bar x)]>2\alpha>0.
\]
Therefore, there exist $\delta>0$ and $\eps>0$ such that
\begin{equation}\label{29}
f(\bar x+t u^\pr)\ge f(\bar x)+\alpha\, t^n/n!>f(\bar x)
\end{equation}
for every $t\in (0,\delta)$ and arbitrary $u^\pr$ with $\norm{u^\pr-u}<\eps$.

Without loss of generality we may suppose that $u$ belongs to the unit sphere $S:=\{u\in\E\mid\norm{u}=1\}$. Since $u$ is arbitrary chosen,  then we can cover $S$ by  neighborhoods $N(u;\eps):=\{u^\pr\in S\mid\norm{u^\pr-u}<\eps\}$ such that (\ref{29}) is satisfied. Taking into account that the unit sphere is compact, then we can choose a finite number of neighborhoods $N(u_1;\eps_1)$, $N(u_2,\eps_2)$,\dots $N(u_s;\eps_s)$ that cover $S$. Let the respective values of $\delta$ are $\delta_1$, $\delta_2$,\dots $\delta_s$ and $\bar \delta=\min\{\delta_i\mid 1\le i\le s\}$. Then we have
\[
f(\bar x+t u^\pr)>f(\bar x),\quad\forall u^\pr\in S,\;\forall t\in(0,\bar\delta). 
\]
Hence, $f(x)>f(\bar x)$ for all $x\in\E$ such that $\norm{x-\bar x}<\bar\delta$, which implies that $\bar x$ is a strict local minimizer.
\end{proof}

\section{Global optimality conditions with a higher-order invex function}
\label{s5}
Example \ref{ex2} shows that the necessary conditions for a local minimum are not sufficient for a global one.
Then the following question arises: Which is the largest class of functions such that the necessary optimality conditions  from Theorem \ref{th1} become sufficient for a global minimum. 
Recently, Ivanov \cite{applanal-1} introduced a new class of functions called higher-order invex ones in terms of the lower Dini  directional derivatives and applied them in sufficient optimality conditions for inequality-constrained nonlinear programming problems \cite{applanal-2}. They extend the so called invex ones. We generalize the notions invexity and higher-order invexity to arbitrary non-differentiable functions in terms of the lower Hadamard directional derivatives of order $n$. 

First, we recall the definition of an invex function \cite{han81} in terms of the lower Hadamard directional derivative.

\begin{definition}
 A proper extended real function $\f$ is called invex in terms of the lower Hadamard directional derivative iff there exists a map
$\eta_1: \E\times \E\to\E$  such that the following inequality holds for all $x\in\E $, $y\in\E$: 
\begin{equation}\label{13}
f(y)-f(x)\ge\ld 1 f(x;\eta_1(x,y)). 
\end{equation}
\end{definition}

We introduce the following definition:
\begin{definition}
We call a proper extended function $\f$  invex of order $n$ in terms of the lower Hadamard derivatives  iff for every $\bar x\in\dom f$, $x\in\E$ with
\[
0\in\lsubd 1 f(\bar x),\; 0\in\lsubd i f(\bar x;0,0,\dots, 0),\; i=2,3,\dots, n
\]
there are $\eta_1$, $\eta_2,\dots$, $\eta_n$, which depend on $\bar x$ and $x$ such that the  following inequality holds 
\begin{equation}\label{8}
f(x)-f(\bar x)\ge\ld 1 f(\bar x;\eta_1(\bar x,x))+\sum_{i=2}^n\ld i f(\bar x; 0,0,\dots,0 ;\eta_i(\bar x,x)).
\end{equation}

If there exist $\eta_1(\bar x,x)$, $\eta_2(\bar x,x)$, $\eta_3(\bar x,x),\dots$ such that (\ref{8}) is satisfied with $n=+\infty$, then we call $f$ invex in generalized sense (or invex of order $+\infty$).
\end{definition}

\begin{definition}
We call a point $x\in\dom f$, which satisfies the necessary conditions (\ref{15}) stationary point of order $n$. 
\end{definition}

\begin{theorem}\label{npi}
Let $n$ be a positive integer or $+\infty$ and  $\f$ a proper extended function.  Then $f$ is invex of order $n$ if and only if each stationary point $\bar x\in\dom f$ of order $n$ $(n<+\infty$ or $n=+\infty)$ is a global minimizer of $f$.
\end{theorem}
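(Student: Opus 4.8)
The plan is to prove the two directions separately, both following directly from the definitions. For the "if" direction, assume that every stationary point of order $n$ is a global minimizer, and construct the maps $\eta_1,\dots,\eta_n$ witnessing invexity of order $n$. Fix $\bar x\in\dom f$ and $x\in\E$ satisfying the stationarity hypothesis $0\in\lsubd 1 f(\bar x)$ and $0\in\lsubd i f(\bar x;0,\dots,0)$ for $i=2,\dots,n$. By assumption $\bar x$ is then a global minimizer, so $f(x)-f(\bar x)\ge 0$. On the other hand, by Theorem \ref{th1} (or directly from stationarity), $\ld 1 f(\bar x;v)\ge 0$ for every direction $v$ and $\ld i f(\bar x;0,\dots,0;v)\ge 0$ for every $v$ and every $i=2,\dots,n$. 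Hence if I simply take $\eta_1(\bar x,x)=\cdots=\eta_n(\bar x,x)=0$, the right-hand side of (\ref{8}) equals $\ld 1 f(\bar x;0)+\sum_{i=2}^n\ld i f(\bar x;0,\dots,0;0)$; since $\ld 1 f(\bar x;0)=0$ and each $\ld i f(\bar x;0,\dots,0;0)=0$ (the difference quotient along $u'\to 0$, $t\downarrow 0$ has $x+tu'\to x$ contributing $0$ in the limit inferior — more carefully, the liminf is $\le 0$ by taking $u'=0$ and $\ge 0$ by stationarity), the right-hand side is $0\le f(x)-f(\bar x)$. So (\ref{8}) holds. The case $n=+\infty$ is identical, with $\eta_i\equiv 0$ for all $i$.

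For the "only if" direction, assume $f$ is invex of order $n$ and let $\bar x\in\dom f$ be a stationary point of order $n$, i.e. (\ref{15}) holds up to order $n$. Then for any $x\in\E$, the defining hypothesis of invexity of order $n$ is met at the pair $(\bar x,x)$, so there exist $\eta_1(\bar x,x),\dots,\eta_n(\bar x,x)$ with (\ref{8}). Now I invoke stationarity term by term: $\ld 1 f(\bar x;\eta_1(\bar x,x))\ge 0$ since $0\in\lsubd 1 f(\bar x)$ means $\ld 1 f(\bar x;v)\ge 0$ for all $v$, and likewise $\ld i f(\bar x;0,\dots,0;\eta_i(\bar x,x))\ge 0$ for each $i=2,\dots,n$ because $0\in\lsubd i f(\bar x;0,\dots,0)$. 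Summing, the right-hand side of (\ref{8}) is nonnegative, whence $f(x)\ge f(\bar x)$. Since $x$ was arbitrary, $\bar x$ is a global minimizer. Again $n=+\infty$ is handled the same way, summing an infinite series of nonnegative terms (each term is $\ge 0$, so the partial sums are nondecreasing and the inequality $f(x)-f(\bar x)\ge$ the sum still forces $f(x)-f(\bar x)\ge 0$).

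The main subtlety — and the only place one must be slightly careful — is the bookkeeping around the order-$n$ subdifferentials in the "if" direction: one has to check that the chosen $\eta_i\equiv 0$ is legitimate, i.e. that $\ld i f(\bar x;0,\dots,0;0)$ is well-defined and equal to $0$ under the stationarity hypothesis, and that the hypothesis "$0\in\lsubd i f(\bar x;0,\dots,0)$" is precisely what makes each $\ld i f(\bar x;0,\dots,0;v)\ge 0$ usable. This is immediate from Definition \ref{def3} and Definition \ref{def4} together with the remark identifying (\ref{14}) with (\ref{15}), but it should be spelled out. No compactness, no Taylor expansion, and no analytic estimates are needed here — unlike Theorem \ref{th4}, this is a purely formal consequence of the definitions, which is exactly why invexity of order $n$ is the largest such class.
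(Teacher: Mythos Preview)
Your proof is correct. The ``only if'' direction (invexity of order $n$ implies every $n$-stationary point is a global minimizer) is essentially identical to the paper's argument.

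For the ``if'' direction you take a genuinely more direct route than the paper. The paper argues by contradiction: assuming invexity of order $n$ fails at some pair $(\bar x,x)$ with $0\in\lsubd i f(\bar x;0,\dots,0)$ for $i\le n$, it first plugs in $u_i=0$ to obtain $f(x)<f(\bar x)$, and then spends two further steps exploiting the positive homogeneity of $\ld i f(\bar x;0,\dots,0;\cdot)$ (degree $i$) to re-derive that $\ld i f(\bar x;0,\dots,0;u)\ge 0$ for all $u$, finally reaching a contradiction with the hypothesis that $n$-stationary points are global minimizers. You instead give an explicit construction: since $\bar x$ already satisfies the subdifferential conditions in the definition of order-$n$ invexity, it is $n$-stationary, hence a global minimizer by hypothesis, and then the choice $\eta_i\equiv 0$ makes the right-hand side of (\ref{8}) equal to $0\le f(x)-f(\bar x)$. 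This works because, as you correctly note, $\ld i f(\bar x;0,\dots,0;0)\le 0$ along the constant path $u'=0$ and $\ge 0$ by stationarity, so each term vanishes. Your argument is shorter and avoids homogeneity entirely; the paper's homogeneity steps are, in light of the definition as stated (which already restricts $(\bar x,x)$ to $n$-stationary $\bar x$), effectively redundant, though they would become essential under a broader definition of invexity not presupposing stationarity of $\bar x$.
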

\begin{proof}
We prove the case $n<+\infty$. The other case is similar.
Suppose that $f$ is invex of order $n$. If the function has no stationary points, then obviously every stationary point is a global minimizer. Suppose that the function has at least one stationary point.
Suppose that $\bar x\in\dom f $ is a given stationary point of order $n$. We prove that it is a global minimizer of $f$.
Take an arbitrary point  $x$ from $\E$. It follows from invexity of order $n$ that there exist $\eta_i(\bar x,x)$, $i=1,2,\dots, n$ such that 
\begin{equation}\label{9}
f(x)-f(\bar x)\ge\ld 1 f(\bar x;\eta_1(\bar x,x))+\sum_{i=2}^n\ld i f(\bar x; 0,0,\dots,0;\eta_i(\bar x,x) ).
\end{equation}

Since $\bar x$ is a stationary point of order $n$, then 
\[
\ld 1 f(\bar x;u)\ge 0,\;\ld i f(\bar x; 0,0,\dots ,0;u)\ge 0\quad\text{for all}\quad i=2,3,\dots,n,\;\forall u\in\E.
\]
Hence
\[
\ld 1 f(\bar x;\eta_1(\bar x,x))\ge 0,\;\ld i f(\bar x; 0,0,\dots ,0;\eta_i(\bar x,x) )\ge 0\quad\text{for all}\quad i=2,3,\dots,n.
\]
It follows from (\ref{9}) that $f(x)\ge f(\bar x)$. Therefore $\bar x$ is a global minimizer.

Conversely, suppose that every stationary point of order $n$ is a global minimizer. We prove that $f$ is invex of order $n$. 
Assume the contrary.  Hence, there exists a pair $(\bar x,x)\in\dom f\times \E$ such that
\[
0\in\lsubd 1 f(\bar x),\; 0\in\lsubd i f(\bar x;0,0,\dots, 0),\; i=2,3,\dots, n,
\]
but 
\begin{equation}\label{10}
f(x)-f(\bar x)<\ld 1 f(\bar x;u_1)+\sum_{i=2}^n\ld i f(\bar x; 0,0,\dots,0;u_i ).
\end{equation}
for all $u_i\in\E$, $i=1,2,\dots,n$.

First, we prove that $f(x)<f(\bar x)$. Let us choose in (\ref{10}) $u_i=0$, $i=1,2,\dots,n$. We have 
\[
\ld 1 f(\bar x;0)\le\liminf_{t\downarrow 0}\,t^{-1}(f(\bar x+t.0)-f(\bar x))=0.
\]
Let $i$ be an arbitrary integer such that $1<i\le n$. Then
\begin{gather*}
\ld i f(\bar x;0,0,\dots,0;0)= \liminf_{t\downarrow 0,u^\pr\to 0}\,\frac{i!}{t^i}\,[f(\bar x+t.u^\pr)-f(\bar x)] \\
\le\liminf_{t\downarrow 0}\,\frac{i!}{t^i}\,[f(\bar x+t.0)-f(\bar x)]=0,\quad i=2,3,\dots, n.
\end{gather*}
 It follows from (\ref{10}) that $f(x)<f(\bar x)$.

Second, we prove that  
\begin{equation}\label{11}
\ld 1 f(\bar x;u)\ge 0,\quad \forall u\in\E.
\end{equation} 
Suppose the contrary that there exists at least one point $v\in\E$ with $\ld 1 f(x;v)<0$. The lower Hadamard directional derivative is positively homogeneous with respect to the direction, that is 
\[
\ld 1 f(\bar x;\tau u)=\tau\ld 1 f(\bar x;u),\quad\forall \bar x\in\dom f,\;\forall u\in\E,\;\forall \tau\in(0,+\infty).
\]
Then inequality (\ref{10}) is satisfied when $u_1=tv$, $t>0$, $u_i=0$, $i\ne 1$, that is 
\[
f(x)-f(\bar x)<t\ld 1 f(\bar x;v),\quad\forall t>0,
\]
which is impossible, because $f(x)-f(\bar x)$ is finite and $\ld 1 f(x;v)<0$.  
Therefore, $\ld 1 f(\bar x;u)\ge 0$ for all $u\in\E$.

Third, we prove that 
\begin{equation}\label{12}
\ld i f(\bar x;0,0,\dots,0;u)\ge 0,\quad i=2,3,\dots, n
\end{equation}
for all $u\in\E$.
Suppose the contrary that there exists $v\in\E$ with $\ld i f(\bar x;0,0,\dots,0;v)<0$. The lower Hadamard directional derivative of order $i$ is positively homogeneous of degree $i$ with respect to the direction, that is
\[
\ld i f(\bar x;0,0,\dots,0;tv)=t^i\ld i f(\bar x;0,0,\dots,0;v),\quad\forall t>0.
\]
Then it follows from (\ref{10}) with $u_i=tv$, $t>0$, $u_k=0$ when $k\ne i$ that
\[
f(x)-f(\bar x)<t^i\ld i f(\bar x;0,0,\dots,0;v ),\quad\forall t>0,
\]
which is impossible when $t$ is sufficiently large positive number.

The following is the last part of the proof.   It follows from (\ref{11}) and (\ref{12}) that $\bar x$ is a stationary point of order $n$. According to the hypothesis $\bar x$ is a global minimizer, which contradicts the inequality $f(x)<f(\bar x)$. 
\end{proof}

In the next claim we show that the class of invex functions of order $(n+1)$ contains all invex functions of order $n$ in terms of the lower Hadamard directional derivative.

\begin{proposition}
Let $\f$ be an invex function of order $n$. Then $f$ is invex of order $(n+1)$. Every invex function of order $n$ is invex of order $+\infty$.
\end{proposition}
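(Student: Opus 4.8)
The plan is to reduce the statement about invexity of order $(n+1)$ to the invexity of order $n$ by exhibiting, for each relevant pair $(\bar x, x)$, suitable maps $\eta_1, \dots, \eta_{n+1}$ that realize inequality~(\ref{8}) with $n$ replaced by $n+1$. The starting observation is that the hypotheses defining invexity of order $(n+1)$ are \emph{stronger} than those defining invexity of order $n$: if
\[
0\in\lsubd 1 f(\bar x),\quad 0\in\lsubd i f(\bar x;0,0,\dots,0),\quad i=2,3,\dots,n+1,
\]
then in particular the same inclusions hold for $i=2,\dots,n$, so $\bar x$ together with $x$ falls under the scope of the definition of invexity of order $n$. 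Hence there exist $\eta_1(\bar x,x),\dots,\eta_n(\bar x,x)$ with
\[
f(x)-f(\bar x)\ge\ld 1 f(\bar x;\eta_1(\bar x,x))+\sum_{i=2}^n\ld i f(\bar x;0,0,\dots,0;\eta_i(\bar x,x)).
\]

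First I would check that such a pair $(\bar x,x)$ is automatically a stationary point of order $n$ in the $\bar x$-variable, exactly as in the proof of Theorem~\ref{npi}: the inclusions $0\in\lsubd 1 f(\bar x)$ and $0\in\lsubd i f(\bar x;0,\dots,0)$ force $\ld 1 f(\bar x;u)\ge 0$ and $\ld i f(\bar x;0,\dots,0;u)\ge 0$ for all $u$ and all $i\le n+1$. Now I would simply append one more term: set $\eta_{n+1}(\bar x,x)=0$. Using the computation already carried out in the proof of Theorem~\ref{npi} (choosing $u^\pr=0$ in the liminf), we have $\ld {n+1} f(\bar x;0,0,\dots,0;0)\le 0$; but combined with the stationarity inequality $\ld {n+1} f(\bar x;0,\dots,0;0)\ge 0$ we actually get $\ld {n+1} f(\bar x;0,0,\dots,0;0)=0$. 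In any case the new term $\ld{n+1} f(\bar x;0,\dots,0;\eta_{n+1}(\bar x,x))$ is $\le 0$, so adding it to the right-hand side of the order-$n$ invexity inequality can only decrease it, and the inequality
\[
f(x)-f(\bar x)\ge\ld 1 f(\bar x;\eta_1(\bar x,x))+\sum_{i=2}^{n+1}\ld i f(\bar x;0,0,\dots,0;\eta_i(\bar x,x))
\]
persists. Since $(\bar x,x)$ was an arbitrary pair satisfying the order-$(n+1)$ hypotheses, $f$ is invex of order $(n+1)$.

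For the last sentence, iterating the first part gives invexity of every finite order $m\ge n$; I would then argue that this already yields invexity of order $+\infty$, since for any pair $(\bar x,x)$ satisfying the (stronger) order-$\infty$ hypotheses one can take $\eta_i(\bar x,x)=0$ for all $i\ge n+1$ and invoke the same sign argument: each tail term $\ld i f(\bar x;0,\dots,0;0)\le 0$, so the finite sum up to $n$ already dominates $f(x)-f(\bar x)$ and the infinite sum (all of whose extra terms are nonpositive, indeed zero) does as well. Alternatively, and perhaps more cleanly, invoke Theorem~\ref{npi}: a stationary point of order $+\infty$ is in particular stationary of order $n$, hence by order-$n$ invexity a global minimizer, hence by Theorem~\ref{npi} again $f$ is invex of order $+\infty$. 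The main point to be careful about is the bookkeeping of the subdifferential hypotheses — that the chain $x^*_i$ may be chosen to be $0$ at every level is exactly what makes the derivatives $\ld i f(\bar x;0,\dots,0;\cdot)$ well defined and what is assumed in the definition — and the observation that enlarging $n$ restricts, rather than relaxes, the class of pairs over which the defining inequality must hold; no genuine analytic obstacle arises.
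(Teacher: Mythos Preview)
Your proposal is correct and follows essentially the same route as the paper: keep the maps $\eta_1,\dots,\eta_n$ supplied by order-$n$ invexity, set $\eta_{n+1}(\bar x,x)=0$, and use $\ld{n+1} f(\bar x;0,\dots,0;0)\le 0$ to conclude that the augmented right-hand side does not exceed the old one. The paper's proof is a single sentence to this effect; your version spells out the bookkeeping (that the order-$(n+1)$ hypotheses imply the order-$n$ ones) and adds the harmless extra observation that the $(n+1)$-st term is actually zero, and your alternative for the $+\infty$ case via Theorem~\ref{npi} is also valid though not in the paper.
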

\begin{proof}
It follows from Equation (\ref{8})  that $f$ is invex of order $(n+1)$ keeping the same maps $\eta_1$, $\eta_2$,..., $\eta_n$ and taking $\eta_{n+1}=0$, because $\ld {n+1} f(\bar x;0,0,\dots,0;0)\le 0$.
\end{proof}

The converse claim is not satisfied. There are a lot of second-order invex functions, which are not invex. The following example is extremely simple.
\begin{example}
Consider the function $f:\R^2\to\R$ defined by 
\[
f(x_1,x_2)=-x_1^2-x_2^2.
\]
We have $\ld 1 f(x;u)=-2x_1 u_1-2x_2 u_2$ where $u=(u_1,u_2)$ is a direction. Its only stationary point is $\bar x=(0,0)$. This point is not a global minimizer. Therefore, the function is not invex. We have $\lsubd 1 f(\bar x)\equiv \{(0,0)\}$ and $\ld 2 f(\bar x;0;u)=-2u_1^2-2u_2^2$. It follows from here that $f$ has no second-order stationary points. Hence, every second-order stationary point is a global minimizer, and the function is second-order invex.
\end{example}

\begin{example}\label{npc}
Consider the function $f_n:\R\to\R$, where $n\ge 2$ is a positive integer:
\[
f_n(x)=\left\{
\begin{array}{rr}
x^n\,, & x\ge 0\,, \\
(-1)^{n-1}x^n\,, & x<0\,.
\end{array}\right.
\]
If $n$ is an odd number, then $f_n=x^n$. For $n$ even $f_n$ is a function from the class C$^{n-1}$, but not from the class C$^n$.
It has no stationary points of order $n$. Therefore, every stationary point of order $n$ is a global minimizer. According to Theorem \ref{npi}, it is invex of order $n$. On the other hand, the point $x=0$ is stationary of order $(n-1)$. Taking into account that $x=0$ is not a global minimizer, we conclude from the same theorem that the function is not invex of order $(n-1)$.
\end{example}

\section{Characterizations of the isolated minimizers}
\label{s4}
The following definition was introduced by Studniarski \cite{stu86} as a generalization of the respective notion of order $1$ and $2$ in \cite{hes75,aus84}.

\begin{definition}
Let $n$ be a positive integer.
A point $\bar x\in\dom f$ is called an isolated local minimizer of
order $n$ for the function $\f$ iff there exist a neighborhood $N$
of $\bar x$ and a constant $C>0$ with
\begin{equation}\label{1}
f(x)\ge f(\bar x)+C\norm{x-\bar x}^n,\quad\forall  x\in N.
\end{equation}
\end{definition}

\begin{definition}[\cite{stu86}]
A direction $d\in\E$ is said to be critical at the point $x\in\E$ iff $\ld 1 f(x;d)\le 0$.
\end{definition}

We introduce the following notion. 

\begin{definition}
Let $x\in\dom f$ be a stationary point of order  $n$. We call a direction $d\in\E$ critical at the point $x\in\E$ of order $m$ ($m$ is positive integer), $m\le n$ iff 
\[
\ld 1 f(x;d)\le 0,\quad  \ld 2 f(x;0;d)\le 0,\;\dots\;, 
 \ld {m} f(\bar x;\underbrace{0,0,\dots ,0}_{(m-1)-times};d)\le 0.
\]
\end{definition}

The notion of a critical direction coincides with the notion of a critical direction of order $1$. If a direction $d$ is critical of order $m$, then it is critical of order $m-1$. The inverse claim is not satisfied. For example, the function $f_n$ from Example \ref{npc} has a critical direction $u=1$ of order $n-1$ at $x=0$, but this direction is not critical of order $n$.

\begin{theorem}\label{th2}
Let be given a proper extended real function $\f$ and $\bar x\in\dom f$. Then the following three conditions are equivalent:

a) $\bar x$ is an isolated local minimizer of order $n$, where $n$ is a positive integer such that $n\ge 2$;

b) (\ref{4}) is satisfied
and
\begin{equation}\label{35}
\ld {n} f(\bar x;\underbrace{0,0,\dots ,0}_{(n-1)-times};u)>0,\quad\forall\; u\in\E\setminus\{0\};
\end{equation}

c) Inequality (\ref{35}) is satisfied for every critical of order $n$ direction $u$ such that $u\ne 0$.
\end{theorem}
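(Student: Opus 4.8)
The plan is to prove the cycle a)~$\Rightarrow$~b)~$\Rightarrow$~c)~$\Rightarrow$~a). For a)~$\Rightarrow$~b): since $n\ge 2$, an isolated local minimizer of order $n$ is in particular a local minimizer, so Theorem~\ref{th1} already yields (\ref{14}), hence (\ref{4}); for (\ref{35}) I would feed the defining estimate (\ref{1}) into Definition~\ref{def3}. For $t>0$ small and $u^\pr$ close to $u$ one has $f(\bar x+tu^\pr)-f(\bar x)\ge C\norm{tu^\pr}^n=Ct^n\norm{u^\pr}^n$, so $n!\,t^{-n}[f(\bar x+tu^\pr)-f(\bar x)]\ge n!\,C\norm{u^\pr}^n$, and passing to the $\liminf$ as $t\downarrow 0$, $u^\pr\to u$ gives $\ld n f(\bar x;0,\dots,0;u)\ge n!\,C\norm{u}^n$, which is positive for $u\ne 0$. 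The implication b)~$\Rightarrow$~c) is immediate, since (\ref{35}) is demanded of all nonzero directions while c) demands it only of the critical directions of order $n$; note also that (\ref{4}) makes $\bar x$ stationary of order $n$, so the notion of a critical direction of order $n$ appearing in c) is meaningful.

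The substantive implication is c)~$\Rightarrow$~a), which I would prove by contradiction, using compactness of the unit sphere in the spirit of the proof of Theorem~\ref{th4}. Because the hypothesis of c) speaks of critical directions of order $n$ at $\bar x$, it presupposes $\bar x$ to be stationary of order $n$; in particular $0\in\lsubd i f(\bar x;0,\dots,0)$ for $i=1,\dots,n-1$, so every derivative $\ld m f(\bar x;0,\dots,0;\cdot)$ with $m\le n$ is well defined and nonnegative on $\E$. Assume $\bar x$ is not an isolated local minimizer of order $n$. Then for each positive integer $j$ there is a point $x_j\ne\bar x$ with $\norm{x_j-\bar x}<1/j$ and $f(x_j)<f(\bar x)+j^{-1}\norm{x_j-\bar x}^n$ (so necessarily $x_j\in\dom f$). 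Put $t_j:=\norm{x_j-\bar x}$, so $t_j\to 0$, and $u_j:=(x_j-\bar x)/t_j$; after passing to a subsequence, $u_j\to u$ with $\norm u=1$, in particular $u\ne 0$.

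Next I would show that $u$ is a critical direction of order $n$. Fix $m$ with $1\le m\le n$ and suppose, to the contrary, that $\ld m f(\bar x;0,\dots,0;u)>0$ (for $m=1$ this reads $\ld 1 f(\bar x;u)$). By the definition of the $\liminf$ there are $\alpha>0$, $\delta\in(0,1]$ and $\eps>0$ with $f(\bar x+tu^\pr)-f(\bar x)\ge(\alpha/m!)\,t^m$ whenever $0<t<\delta$ and $\norm{u^\pr-u}<\eps$. Taking $j$ large enough that $t_j<\delta$ and $\norm{u_j-u}<\eps$ yields $f(x_j)-f(\bar x)\ge(\alpha/m!)\,t_j^m\ge(\alpha/m!)\,t_j^n$, because $0<t_j\le 1$ and $m\le n$; together with $f(x_j)-f(\bar x)<j^{-1}t_j^n$ this forces $\alpha/m!<1/j$ for all large $j$, which is impossible. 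Hence $\ld m f(\bar x;0,\dots,0;u)\le 0$ for every $m\le n$, so $u$ is critical of order $n$; then c) gives $\ld n f(\bar x;0,\dots,0;u)>0$, contradicting the case $m=n$ just obtained. Therefore $\bar x$ is an isolated local minimizer of order $n$, and the cycle closes.

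The hard part is c)~$\Rightarrow$~a). Unlike Theorem~\ref{th4}, where the critical order may vary with the direction, here I must pin down one limiting direction $u$ and show it is critical of order $n$ all at once, and I must be certain the derivatives $\ld m f(\bar x;0,\dots,0;\cdot)$ are genuinely defined --- which is exactly where the stationarity of order $n$ built into c) is used. The one bit of bookkeeping that cannot be skipped is to shrink $\delta$ to at most $1$, so that $t^m\ge t^n$ is available for $0<t\le 1$ and $m\le n$; after that the estimates are routine, and in contrast to the proof of Theorem~\ref{th4} the finite subcover argument is not needed here, because the single limiting direction $u$ already produces the contradiction.
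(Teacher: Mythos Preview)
Your proof is correct and follows essentially the same route as the paper: the cycle a)\,$\Rightarrow$\,b)\,$\Rightarrow$\,c)\,$\Rightarrow$\,a), with the substantive work in c)\,$\Rightarrow$\,a) done by contradiction via a sequence $x_j\to\bar x$ violating the order-$n$ growth, extracting a limiting unit direction, and showing that direction is critical of order~$n$. The only cosmetic difference is in how you establish $\ld m f(\bar x;0,\dots,0;u)\le 0$ for each $m\le n$: the paper plugs the sequence $(t_k,d_k)$ directly into the $\liminf$ definition and reads off $\ld m f(\bar x;0,\dots,0;d)\le\liminf_k m!\,\varepsilon_k t_k^{\,n-m}=0$ in one line, whereas you argue by contradiction (assume the derivative is positive, extract a uniform lower bound $f(x_j)-f(\bar x)\ge(\alpha/m!)\,t_j^m$, and collide it with the upper bound $j^{-1}t_j^n$ after noting $t_j^m\ge t_j^n$). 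Both are equivalent; the paper's version is a bit shorter, yours makes the role of the $t^m\ge t^n$ comparison more explicit. Your observation that c) implicitly carries stationarity of order~$n$ (so that the higher derivatives at $\bar x$ with subgradients $0,\dots,0$ are well defined and nonnegative) is exactly the point the paper uses when it invokes~(\ref{4}) inside the c)\,$\Rightarrow$\,a) argument.
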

\begin{proof}
We prove the claim a) $\Rightarrow$ b).
Let $\bar x$ be an isolated local minimizer of order $n$. We prove that Conditions (\ref{4}) and (\ref{35}) hold.
Suppose that $u\in\E$ is arbitrary chosen. It follows from Inequality (\ref{1}) that there exist numbers $\delta>0$, $\varepsilon>0$ and $C>0$ with
\begin{equation}\label{3}
f(\bar x+tu^\pr)\ge f(\bar x)+Ct^n\norm{u^\pr}^n
\end{equation}
for all  $t\in (0,\delta)$ and every $u^\pr$ such that $\norm{u^\pr-u}<\varepsilon$. Therefore 
\begin{equation}\label{2}
\ld 1 f (\bar x;u)=\liminf_{t\downarrow 0,u^\pr\to u,}\,t^{-1}[f(\bar x+t u^\pr)-f(\bar x)]\ge\liminf_{t\downarrow 0,u^\pr\to u,}\, Ct^{n-1}\norm{u^\pr}^{n}=0.
\end{equation}
Therefore $0\in\lsubd 1 f(\bar x)$. 

Suppose that $m$ is an arbitrary positive integer such that $1\le m<n$ and we have
\[
0\in\lsubd i f(\bar x;\underbrace{0,0,\dots ,0)}_{(i-1)-times}\quad\text{for all}\quad i<m.
\]
It follows from (\ref{3}) that
\begin{gather*}
\ld m f(\bar x;\underbrace{0,0,\dots ,0}_{(m-1)-times};u) 
=\liminf_{t\downarrow 0,u^\pr\to u,}\,m!\, t^{-m}[f(\bar x+t u^\pr)-f(\bar x)] \\
\ge\liminf_{t\downarrow 0,u^\pr\to u,}\, Ct^{n-m}\norm{u^\pr}^{n}=0, \;\forall u\in\E.
\end{gather*}
and $0\in\lsubd m f(\bar x;\underbrace{0,0,\dots ,0)}_{(m-1)-times}$. Then it follows from (\ref{3}) that
\[
\ld n f(\bar x;\underbrace{0,0,\dots ,0}_{(n-1)-times};u)\ge\liminf_{t\downarrow 0,u^\pr\to u,}\, C\norm{u^\pr}^{n} >0,\quad\forall u\in\E\setminus\{0\}.
\] 

The claim b) $\Rightarrow$ c) is obvious.

At last, we prove the claim c) $\Rightarrow$ a).
Suppose that Conditions {\rm (\ref{35})}  is satisfied for every critical direction $u\ne 0$ of order $(n-1)$. We prove that $\bar x$ is an isolated local minimizer of order $n$. Assume the contrary that $\bar x$ is not an isolated minimizer of order $n$.
Therefore, for every sequence $\{\varepsilon_k\}_{k=1}^\infty$ of positive numbers
converging to zero, there exists a sequence $\{x_k\}$ with $x_k\in\dom f$ such that
\begin{equation}\label{6}
\norm{x_k-\bar x}\le \varepsilon_k,\quad
f(x_k)< f(\bar x)+\varepsilon_k\norm{x_k-\bar x}^n,
\end{equation}

It follows from (\ref{6}) that $x_k\to\bar x$. Denote $t_k=\norm{x_k-\bar x}$,
$d_k=(x_k-\bar x)/t_k$. Passing to a subsequence, we may suppose
that $d_k\to  d$ where $\norm{d}=1$. It follows from here that
\begin{gather*}
\ld 1 f (\bar x;d)\le\liminf_{k\to\infty}\, t^{-1}_k[f(\bar x+t_k d_k)-f(\bar x)] \\
=\liminf_{k\to\infty}\, t^{-1}_k[f(x_k)-f(\bar x)]
\le\liminf_{k\to\infty}\,\varepsilon_k t_k^{n-1}=0.
\end{gather*}
It follows from $0\in\subd f(\bar x)$ that $\ld 1 f (\bar x;d)=0$. 

Let $m$ be any integer with $1<m\le n$ such that $\ld i f(\bar x;\underbrace{0,0,\dots ,0}_{(i-1)-times};d)=0$ for $i<m$. Therefore
\begin{equation}\label{16}
\begin{array}{l}
\ld {m} f(\bar x;\underbrace{0,0,\dots ,0}_{(m-1)-times};d)\le\liminf_{n\to+\infty}\,m!\,t^{-m}[f(x_k)-f(\bar x)] \\
\le\liminf_{k\to\infty}\, m!\,\varepsilon_k\, t_k^{n-m}=0,
\end{array}
\end{equation}
because $n-m\ge 0$ and $\varepsilon_k\to 0$. Then it follows from (\ref{4}) that
\[
\ld m f(\bar x;\underbrace{0,0,\dots ,0}_{(m-1)-times};d)=0\quad {\rm if}\quad m<n.
\]
Therefore  the direction $d$ is critical of order $n$.
 We conclude from the case $m=n$ that Inequality (\ref{16}) contradicts Condition (\ref{35}).
\end{proof}

\begin{example}
Consider the function $f:\R^2\to\R$ defined by
\[
f(x_1,x_2)=\left\{
\begin{array}{ll}
\exp\,(-1/(x_1^2+x_2^2)), & \textrm{if}\quad (x_1,x_2)\ne (0,0), \\
0, & \textrm{if}\quad (x_1,x_2)=(0,0).
\end{array}\right.
\]
The point $\bar x=(0,0)$ is a strict global minimizer, but there is no a positive integer $n$ such that $\bar x$ is an isolated minimizer of order $n$. This fact can be established directly from the definition of an isolated minimizer of order $n$, but it also follows from Theorem \ref{th2}.
\end{example}

\begin{remark}
Studniarski introduced in Ref. \cite{stu86} the following directional derivative of order $n$ at the point $x$ in direction $u$:
\[
\underline d^n f(x;u)=\liminf_{t\downarrow 0,u^\pr\to u}\, t^{-n}\, [f(x+t u^\pr)-f(x)].
\]
He derived necessary and sufficient optimality conditions for isolated local minima of order $n$ for unconstrained problems in term of this derivative. Obviously, this derivative does not coincide with the Fr\'echet directional derivative of order $n$ in the case when the last one exists. On the other hand, we have 
\[
\ld n f(x;0,0,\dots,0;u)=n!\underline d^n f(x;u).
\]

Theorem 2.1 in \cite{stu86} and Theorem \ref{th2} are similar but different.
The essential  difference  is that in Condition c) Studniarski supposed that the inequality is satisfied for every critical direction $d$, $d\ne 0$. Since every critical direction of order $n-1$ is critical, then \cite[Theorem 2.1]{stu86} is a consequence of Theorem \ref{th2}.
\end{remark}

The following derivative of order $k$ ($k$ is positive integer) was introduced by Demyanov \cite{dem00}:
\[
f^{\downarrow}_k(x)=\liminf_{y\to x,y\ne x}\frac{f(y)-f(x)}{\norm{y-x}^k}.
\]

The following one is the main result in \cite{dem00}:

\begin{proposition}\label{pr1}
For a point $x\in\dom f$ to be a global or local minimizer of the function $f$ on a metric space  $X$ it is necessary that $f^{\downarrow}_k(x)\ge 0$ for every positive integer $k$.

If for some positive integer $k$ it turns out that $f^{\downarrow}_k(x)>0$, then $x$ is a strict local minimizer of $f$ on $X$.
\end{proposition}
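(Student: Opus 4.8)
The plan is to establish the two assertions of the proposition separately, each by directly unwinding the definition of $f^{\downarrow}_k$.

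For the necessity part I would start from the assumption that $x$ is a local minimizer of $f$, so there is a neighborhood $N$ of $x$ with $f(y)\ge f(x)$ for every $y\in N$. Since $x\in\dom f$, the difference $f(y)-f(x)$ is well defined (possibly $+\infty$) and nonnegative for such $y$, hence the quotient $[f(y)-f(x)]/\norm{y-x}^k$ is nonnegative for every $y\in N$ with $y\ne x$. Passing to the $\liminf$ as $y\to x$, $y\ne x$ --- which for $y$ sufficiently close to $x$ only involves points of $N$ --- yields $f^{\downarrow}_k(x)\ge 0$, and this holds for every positive integer $k$. A global minimizer is in particular a local minimizer, so the claim for global minimizers is contained in this.

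For the sufficiency part I would argue straight from the definition of the $\liminf$: if $f^{\downarrow}_k(x)>0$ for some $k$, pick a real number $\alpha$ with $0<\alpha<f^{\downarrow}_k(x)$. Then there is a neighborhood $N$ of $x$ such that $[f(y)-f(x)]/\norm{y-x}^k\ge\alpha$ for all $y\in N$ with $y\ne x$ (if $f(y)=+\infty$ this inequality is trivial). Rearranging gives $f(y)\ge f(x)+\alpha\norm{y-x}^k>f(x)$ for every such $y$, which is exactly the definition of a strict local minimizer of $f$ at $x$.

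There is essentially no obstacle here: the proof is a two-line reduction in each direction, and the only point requiring a moment of care --- a triviality rather than a difficulty --- is the bookkeeping for the value $+\infty$, handled by the remark that $f(x)<+\infty$ because $x\in\dom f$, while any term $f(y)=+\infty$ only makes the relevant inequalities easier. I would also note that the argument goes through verbatim on an arbitrary metric space $X$ with $\norm{y-x}$ replaced by the distance between $y$ and $x$, which is presumably why Demyanov stated the result in that generality; in the present setting, where $\E$ is a finite-dimensional Euclidean space, the norm $\norm{\cdot}$ suffices.
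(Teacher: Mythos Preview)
Your argument is correct in both directions and is the natural elementary proof. Note, however, that the paper does not supply its own proof of this proposition: it is quoted verbatim as ``the main result in \cite{dem00}'' and left unproved, serving only as background for the subsequent improvement in Proposition~\ref{pr4}. There is therefore nothing in the paper to compare your proof against; what you have written is precisely the straightforward verification one would expect, and it stands on its own.
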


The reader may compare this proposition with Lemma \ref{lema2} below. Is it sensible to locate strict minimizers using the derivatives of order higher than first, if we do not differentiate them? We prove that the condition $f^{\downarrow}_k(x)>0$ is a complete characterization of the isolated local minimizers of order $k$ applying Theorem \ref{th2}.

Really, the following relation holds between the derivatives of Studniarski and Demyanov:
\begin{proposition}\label{prdem}
Let $x\in\dom f$ and $n$ be any positive integer. Then
\[
f^{\downarrow}_n(x)=\min_{u\in S}\underline d^n f(x;u),\textrm{ where } S:=\{u\in\E\mid\norm{u}=1\}
\]
\end{proposition}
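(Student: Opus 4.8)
The plan is to prove the identity by sandwiching: I establish the inequality $f^{\downarrow}_n(x)\le\inf_{u\in S}\underline d^n f(x;u)$ and, for a suitable $\hat u\in S$, the reverse inequality $\underline d^n f(x;\hat u)\le f^{\downarrow}_n(x)$; chaining the two forces all the quantities to coincide and, as a by-product, shows that the infimum over $S$ is attained, hence is a minimum. The engine throughout is the change of variables $y=x+tu^\pr$ with $t=\norm{y-x}>0$ and $u^\pr=(y-x)/\norm{y-x}\in S$, which identifies a deleted neighbourhood of $x$ with the family of pairs $(t,u^\pr)$ where $t$ is small and $u^\pr$ lies on the unit sphere.

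For ``$\le$'': fix $u\in S$ and, from the liminf defining $\underline d^n f(x;u)$, pick $t_k\downarrow 0$ and $u^\pr_k\to u$ with $t_k^{-n}[f(x+t_k u^\pr_k)-f(x)]\to\underline d^n f(x;u)$. Since $\norm u=1$ we have $u^\pr_k\ne 0$ for large $k$, so $y_k:=x+t_k u^\pr_k$ lies in $\E\setminus\{x\}$, converges to $x$, and $\norm{y_k-x}^n=t_k^n\norm{u^\pr_k}^n$; therefore
\[
\frac{f(y_k)-f(x)}{\norm{y_k-x}^n}=\norm{u^\pr_k}^{-n}\,t_k^{-n}[f(x+t_k u^\pr_k)-f(x)]\longrightarrow\underline d^n f(x;u),
\]
the convergence persisting even when $\underline d^n f(x;u)=\pm\infty$ because $\norm{u^\pr_k}^{-n}\to 1$ stays bounded and bounded away from $0$. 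Hence $f^{\downarrow}_n(x)\le\underline d^n f(x;u)$, and, $u\in S$ being arbitrary, $f^{\downarrow}_n(x)\le\inf_{u\in S}\underline d^n f(x;u)$. For the reverse direction I choose $y_k\in\E\setminus\{x\}$ with $y_k\to x$ along which $\frac{f(y_k)-f(x)}{\norm{y_k-x}^n}\to f^{\downarrow}_n(x)$, set $t_k=\norm{y_k-x}\downarrow 0$ and $u_k=(y_k-x)/t_k\in S$, and use compactness of $S$ to pass to a subsequence with $u_k\to\hat u\in S$. Since $y_k=x+t_k u_k$, $t_k\downarrow 0$, $u_k\to\hat u$ and $\norm{y_k-x}^n=t_k^n$ exactly, the definition of $\underline d^n f(x;\hat u)$ as a liminf over all such configurations gives
\[
\underline d^n f(x;\hat u)\le\liminf_{k\to\infty}t_k^{-n}[f(x+t_k u_k)-f(x)]=\liminf_{k\to\infty}\frac{f(y_k)-f(x)}{\norm{y_k-x}^n}=f^{\downarrow}_n(x).
\]
Putting the two displays together, $\inf_{u\in S}\underline d^n f(x;u)\le\underline d^n f(x;\hat u)\le f^{\downarrow}_n(x)\le\inf_{u\in S}\underline d^n f(x;u)$, so all four terms agree and the infimum is attained at $\hat u$, which is precisely the asserted equality.

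The single indispensable ingredient is finite-dimensionality of $\E$, used only through compactness of the unit sphere: that is what licenses the extraction of the subsequence $u_k\to\hat u$ and what upgrades ``$\inf$'' to ``$\min$''. Everything else is routine bookkeeping; the one point to keep an eye on is that $f^{\downarrow}_n(x)$ and $\underline d^n f(x;u)$ may take the values $\pm\infty$, but since the rescaling factor $\norm{u^\pr_k}^{\pm n}$ tends to $1$ this is harmless and no case distinction is really needed.
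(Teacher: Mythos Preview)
Your proof is correct and follows essentially the same approach as the paper's: both directions are handled by the same change of variables $y\leftrightarrow(t,u')$ and the same compactness argument on $S$ to produce the minimizing direction. The only cosmetic difference is that for the inequality $f^{\downarrow}_n(x)\le\underline d^n f(x;u)$ you extract a realizing sequence for $\underline d^n f(x;u)$ and feed it into the liminf defining $f^{\downarrow}_n(x)$, whereas the paper compares the two liminfs directly via $t^{-n}=(t\norm{u'})^{-n}\norm{u'}^{n}$; these are equivalent manoeuvres.
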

\begin{proof}
Take an arbitrary point $u\in S$. We have
\begin{gather*}
\liminf_{t\downarrow 0,u^\pr\to u}\frac{f(x+t u^\pr)-f(x)}{t^n}=\liminf_{t\downarrow 0,u^\pr\to u}\frac{f(x+t u^\pr)-f(x)}{(t\norm{u^\pr})^n}\\
\ge\liminf_{y\to x,y\ne x}\frac{f(y)-f(x)}{\norm{y-x}^n}=f^{\downarrow}_n(x).
\end{gather*}
Therefore $f^{\downarrow}_n(x)\le\underline d^n f(x;u)$ for every $u\in S$. It follows from here that
\[
f^{\downarrow}_n(x)\le\inf_{u\in S}\underline d^n f(x;u).
\]

We prove the inverse inequality. Let us take an arbitrary infinite sequence $\{\eps_i\}_{i=1}^{\infty}$ such that $\eps_i>0$. There exists a sequence $y_i$ such that $y_i\to x$, $y_i\ne x$ and 
\[
f(y_i)-f(x)]/\norm{y_i-x}^n<f^{\downarrow}_n(x)+\eps_i.
\]
We can choose $y_i$ such that 
\[
f^{\downarrow}_n(x)=\lim_{i\to\infty} [f(y_i)-f(x)]/\norm{y_i-x}^n.
\]
Denote $t_i=\norm{y_i-x}$ and $v_i=(y_i-x)/t_i$. Without loss of generality we could suppose that $v_i\to v\in S$, because $\norm{v_i}=1$.
It follows from here that

\[
\underline d^n f(x;v)\le\liminf_{i\to+\infty}\frac{f(x+t_i v_i)-f(x)}{t_i^n}=\liminf_{i\to+\infty}\frac{f(y_i)-f(x)}{\norm{y_i-x}^n}=f^{\downarrow}_n(x).
\]
Therefore, $\inf_{u\in S}\underline d^n f(x;u)=f^{\downarrow}_n(x)=\underline d^n f(x;v)$. 
\end{proof}

\begin{proposition}\label{pr4}
Let be given a proper extended real function $\f$ and $\bar x\in\dom f$. Then $\bar x$ is an isolated local minimizer of order $n$, where $n$ is a positive integer, if and only if $f^{\downarrow}_n(\bar x)>0$.
\end{proposition}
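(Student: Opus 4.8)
The plan is to derive the equivalence from the characterization already proved in Theorem \ref{th2}, using Proposition \ref{prdem} and the identity $\ld n f(\bar x;0,\dots,0;u)=n!\,\underline d^n f(\bar x;u)$ recorded in the Remark following that theorem as a translation dictionary between $f^{\downarrow}_n$, Studniarski's $\underline d^n f$, and our lower Hadamard derivative of order $n$. Since Theorem \ref{th2} is stated only for $n\ge 2$, I would first settle $n=1$ directly: $f^{\downarrow}_1(\bar x)>0$ means, by the definition of $\liminf$, that $f(y)-f(\bar x)>\tfrac12 f^{\downarrow}_1(\bar x)\norm{y-\bar x}$ on some punctured ball about $\bar x$, and this inequality is trivially an equality at $y=\bar x$, so $\bar x$ is an isolated local minimizer of order $1$; conversely such a minimizer with constant $C$ gives at once $f^{\downarrow}_1(\bar x)\ge C>0$. (The same two lines in fact prove the whole statement for every $n$, but for $n\ge 2$ I will route the argument through Theorem \ref{th2} as intended.)

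For the forward implication with $n\ge 2$: if $\bar x$ is an isolated local minimizer of order $n$, then Theorem \ref{th2}, a)$\Rightarrow$b), gives that (\ref{4}) holds and that $\ld n f(\bar x;0,\dots,0;u)>0$ for every $u\ne 0$; rewriting the left-hand side as $n!\,\underline d^n f(\bar x;u)$ yields $\underline d^n f(\bar x;u)>0$ for all $u$ on the unit sphere $S$. By Proposition \ref{prdem}, $f^{\downarrow}_n(\bar x)$ is the \emph{minimum} of $\underline d^n f(\bar x;\cdot)$ over the compact set $S$, and this minimum is attained at a point where the value is positive, so $f^{\downarrow}_n(\bar x)>0$.

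For the converse with $n\ge 2$, suppose $f^{\downarrow}_n(\bar x)>0$. Then $\bar x$ is already a local minimizer of $f$ (by the elementary argument above, or by the second part of Proposition \ref{pr1}), so $f(\bar x+tu^\pr)\ge f(\bar x)$ for all small $t>0$ and all $u^\pr$; hence $\underline d^k f(\bar x;u)\ge 0$ for every $u\in\E$ and every positive integer $k$. By induction on $k=1,2,\dots,n-1$ this gives $0\in\lsubd 1 f(\bar x)$ and $0\in\lsubd k f(\bar x;0,\dots,0)$, where at each step the identity $\ld k f(\bar x;0,\dots,0;u)=k!\,\underline d^k f(\bar x;u)$ is legitimate precisely because the previous steps have already placed $0$ in all lower-order subdifferentials; this is condition (\ref{4}). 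With (\ref{4}) in force the identity $\ld n f(\bar x;0,\dots,0;u)=n!\,\underline d^n f(\bar x;u)$ is available, and Proposition \ref{prdem} together with positive homogeneity of degree $n$ of $\underline d^n f(\bar x;\cdot)$ gives $\underline d^n f(\bar x;u)\ge f^{\downarrow}_n(\bar x)>0$ for every $u\ne 0$, i.e. inequality (\ref{35}). Theorem \ref{th2}, b)$\Rightarrow$a), then shows that $\bar x$ is an isolated local minimizer of order $n$.

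The only place where care is really needed is the recursive bookkeeping in the converse: one may not even write the symbol $\ld k f(\bar x;0,\dots,0;u)$ until $0$ is known to lie in $\lsubd i f(\bar x;0,\dots,0)$ for all $i<k$, so the verification of (\ref{4}) must be organized as an induction in which the membership $0\in\lsubd k f(\bar x;0,\dots,0)$ and the identity with $\underline d^k f$ are proved together. Everything else is the translation dictionary above plus positive homogeneity and the compactness of $S$, which is already packaged inside Proposition \ref{prdem}.
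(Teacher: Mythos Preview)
Your proof is correct and follows the same route as the paper's own argument: both directions go through Theorem \ref{th2} and Proposition \ref{prdem}, with the identity $\ld n f(\bar x;0,\dots,0;u)=n!\,\underline d^n f(\bar x;u)$ serving as the bridge. In fact you are more careful than the paper on two points. First, you treat $n=1$ separately, which is needed since Theorem \ref{th2} is stated only for $n\ge 2$; the paper's proof silently invokes Theorem \ref{th2} for all $n$. Second, in the converse direction the paper simply writes the identity $\ld n f(\bar x;0,\dots,0;u)=n!\,\underline d^n f(\bar x;u)$ and appeals to Theorem \ref{th2} without explicitly checking condition (\ref{4}); you correctly organize this as an induction on $k$, noting that the symbol $\ld k f(\bar x;0,\dots,0;u)$ is not even defined until $0$ has been placed in all lower-order subdifferentials, and that this follows from $f^{\downarrow}_n(\bar x)>0$ forcing $\bar x$ to be a local minimizer and hence $\underline d^k f(\bar x;u)\ge 0$ for every $k$. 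Your parenthetical observation that the whole equivalence admits a two-line direct proof straight from the definitions (bypassing Theorem \ref{th2} entirely) is also correct and worth keeping in mind.
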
 
\begin{proof}
Let $\bar x$ be an isolated local minimizer of order $n$. We prove that $f^{\downarrow}_n(\bar x)>0$. By Proposition \ref{prdem} there exists a $v\in S$ such that
\[
f^{\downarrow}_n(\bar x)=\underline d^n f(\bar x;v)=\liminf_{t\downarrow 0,v^\pr\to v}\, t^{-n}\, [f(\bar x+t v^\pr)-f(\bar x)].
\] 
According to Theorem \ref{th2} we have 
\[
f^{\downarrow}_n(\bar x)=\underline d^n f(x;v)=\ld {n} f(\bar x;\underbrace{0,0,\dots ,0}_{(n-1)-times};v)>0.
\]

Suppose that $f^{\downarrow}_n(\bar x)>0$. We prove that $\bar x$ is an isolated local minimizer of order $n$.
By Proposition \ref{prdem} we obtain $\min_{u\in S}\underline d^n f(x;u)=f^{\downarrow}_n(x)>0$.
By the relation $\ld n f(\bar x;\underbrace{0,0,\dots,0}_{(n-1)-times};u)=n!\underline d^n f(\bar x;u)$ we conclude from Theorem \ref{th2} that $x$ is an isolated local minimizer of order $n$.
\end{proof}

Now we investigate more detailed the isolated minimizers.
\begin{lemma}\label{lema2}
Let $\bar x\in\dom f $ be an isolated  local minimizer of order $n$ of the proper extended real function $\f$. Then $\bar x$ is an isolated local minimizer of order $n+1$.
\end{lemma}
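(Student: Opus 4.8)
The plan is to argue directly from the definition of an isolated local minimizer of order $n$. By hypothesis there are a neighborhood $N$ of $\bar x$ and a constant $C>0$ with
\[
f(x)\ge f(\bar x)+C\norm{x-\bar x}^n\qquad\text{for all }x\in N.
\]
The first step is to shrink $N$: put $N':=N\cap\{x\in\E\mid\norm{x-\bar x}<1\}$, which is again a neighborhood of $\bar x$. This is the only move that needs any thought, and it is essentially trivial; its sole purpose is to guarantee $\norm{x-\bar x}\le 1$ on the whole of $N'$.

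The key (and only) inequality is then $\norm{x-\bar x}^{n}\ge\norm{x-\bar x}^{n+1}$, which holds precisely because $\norm{x-\bar x}\le 1$ for $x\in N'$. Substituting it into the displayed estimate gives
\[
f(x)\ge f(\bar x)+C\norm{x-\bar x}^n\ge f(\bar x)+C\norm{x-\bar x}^{n+1}\qquad\text{for all }x\in N',
\]
which is exactly the defining inequality for an isolated local minimizer of order $n+1$, with the same constant $C$ and the neighborhood $N'$. Hence $\bar x$ is an isolated local minimizer of order $n+1$.

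There is no genuine obstacle in this lemma, since the property ``isolated local minimizer of order $n$'' becomes, locally, weaker as $n$ increases. I would only remark that one could avoid passing to radius $1$ by keeping a ball of radius $\delta$ and replacing $C$ by $C/\delta$; the version above, however, is the cleanest and applies uniformly to every positive integer $n$. An alternative route via Theorem \ref{th2} and Proposition \ref{prdem} is also possible but is longer, so the direct estimate above is preferable.
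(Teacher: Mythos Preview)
Your proof is correct and follows essentially the same direct-from-definition approach as the paper; the only cosmetic difference is that you shrink the neighborhood to radius $1$ and keep the constant $C$, whereas the paper keeps the original neighborhood $N$ and replaces $C$ by $C/\eps$ with $\eps=\sup\{\norm{x-\bar x}\mid x\in N\}$. You even anticipate this variant in your closing remark, so there is nothing to add.
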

\begin{proof}
Let $\bar x$ be an isolated local minimizer of order $n$. It follows from the definition that there exist a neighborhood $N$
of $\bar x$ and a constant $C>0$ which satisfy inequality (\ref{1}).
Let $\eps=\sup\{\norm{x-\bar x}\mid x\in N\}$. Obviously $\eps>0$. Then
\[
[f(x)-f(\bar x)]/\norm{x-\bar x}^{n+1}>C/\eps,\quad\forall x\in N
\]

\noindent
which implies that $\bar x$ is an isolated local minimizer of order $n+1$ with a constant $C/\eps$ and the same neighborhood $N$.
\end{proof}

The following question arises from Lemma \ref{lema2}: How can we characterize the isolated local minimizers of order $n$ and such that $n$ is the least possible such integer?

\begin{theorem}\label{th-Z}
Let be given a proper extended real function $\f$ and $\bar x\in\dom f $. Then the following claims are equivalent:

a) $\bar x$ is an isolated  local minimizer of order $n$  and for every positive integer $k<n$ $\bar x$ is not an isolated local minimizer of order $k$;

b) Conditions (\ref{4}) and (\ref{35}) are satisfied
and for every $k<n$ there exists a direction $d_k\ne 0$ such that $\ld k f(\bar x;\underbrace{0,0,\dots ,0}_{(k-1)-times};d_k)=0$;

c) $f^{\downarrow}_k(\bar x)=0$ for every $k<n$ and $f^{\downarrow}_n(\bar x)>0$.
\end{theorem}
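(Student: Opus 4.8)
The plan is to prove the two equivalences a)$\,\Leftrightarrow\,$c) and b)$\,\Leftrightarrow\,$c), using the already established Theorem \ref{th2} and Propositions \ref{pr1}, \ref{pr4}, \ref{prdem}; throughout, set $S=\{u\in\E\mid\norm{u}=1\}$. For a)$\,\Leftrightarrow\,$c): by Proposition \ref{pr4}, for each positive integer $m$ the point $\bar x$ is an isolated local minimizer of order $m$ if and only if $f^{\downarrow}_m(\bar x)>0$, so a) says precisely that $f^{\downarrow}_n(\bar x)>0$ and $f^{\downarrow}_k(\bar x)\le 0$ for every $k<n$. If a) holds then $\bar x$ is a local minimizer, hence $f^{\downarrow}_k(\bar x)\ge 0$ for all $k$ by Proposition \ref{pr1}, and together with $f^{\downarrow}_k(\bar x)\le 0$ this yields $f^{\downarrow}_k(\bar x)=0$ for $k<n$, i.e. c). Conversely, c) gives $f^{\downarrow}_n(\bar x)>0$ and, for every $k<n$, $f^{\downarrow}_k(\bar x)=0$, so that by Proposition \ref{pr4} the point $\bar x$ is an isolated local minimizer of order $n$ but of no smaller order, i.e. a).

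For b)$\,\Leftrightarrow\,$c) I would first observe that, by Theorem \ref{th2}, conditions (\ref{4}) and (\ref{35}) together are equivalent to ``$\bar x$ is an isolated local minimizer of order $n$'', which by Proposition \ref{pr4} is equivalent to $f^{\downarrow}_n(\bar x)>0$; thus the first halves of b) and c) match, and in either case (\ref{4}) holds. It then remains, assuming (\ref{4}), to identify the second halves. From (\ref{4}) we get $0\in\lsubd k f(\bar x;0,\dots,0)$ for $k=1,\dots,n-1$, so each derivative $\ld k f(\bar x;0,\dots,0;\cdot)$ is well defined and, by the definition of the subdifferential of order $k$, satisfies $\ld k f(\bar x;0,\dots,0;u)\ge 0$ for all $u\in\E$. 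Then, by Proposition \ref{prdem} and the identity $\ld k f(\bar x;0,\dots,0;u)=k!\,\underline d^k f(\bar x;u)$ from the Remark after Theorem \ref{th2},
\[
f^{\downarrow}_k(\bar x)=\min_{u\in S}\underline d^k f(\bar x;u)=\frac{1}{k!}\min_{u\in S}\ld k f(\bar x;0,\dots,0;u)\ge 0,
\]
the minimum being attained since $S$ is compact. Hence $f^{\downarrow}_k(\bar x)=0$ if and only if $\ld k f(\bar x;0,\dots,0;u)=0$ for some $u\in S$, and by positive homogeneity of degree $k$ of this derivative this is equivalent to the existence of a direction $d_k\ne 0$ with $\ld k f(\bar x;0,\dots,0;d_k)=0$. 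Applying this for every $k<n$ identifies the second halves of b) and c), finishing the proof.

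I do not expect a real obstacle, since the proof only assembles earlier results. The single point needing care is in a)$\,\Leftrightarrow\,$c), where one must upgrade ``$\bar x$ is not an isolated local minimizer of order $k$'' from the inequality ``$f^{\downarrow}_k(\bar x)\le 0$'' to the exact equality ``$f^{\downarrow}_k(\bar x)=0$''; this is exactly where the sign bound $f^{\downarrow}_k(\bar x)\ge 0$ enters, which is available either from Proposition \ref{pr1} (as $\bar x$ is a local minimizer) or, without leaving the paper's new machinery, from (\ref{4}) via Theorem \ref{th2} and Proposition \ref{prdem}. A minor bookkeeping matter is to check, using (\ref{4}), that every derivative appearing in the statement is well defined; and for $n=1$ the second halves of b) and c) are vacuous and the theorem reduces to Proposition \ref{pr4}.
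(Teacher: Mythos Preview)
Your proof is correct. The equivalence a)$\Leftrightarrow$c) is argued exactly as in the paper, via Propositions~\ref{pr1} and~\ref{pr4}. The only difference is organizational: the paper closes the triangle by proving a)$\Leftrightarrow$b) directly from Theorem~\ref{th2} (applying it once with order $n$ and once with each order $k<n$), whereas you close it by proving b)$\Leftrightarrow$c), matching the first halves through Theorem~\ref{th2} and Proposition~\ref{pr4}, and then linking the second halves by Proposition~\ref{prdem} and the identity $\ld k f(\bar x;0,\dots,0;u)=k!\,\underline d^k f(\bar x;u)$. Both routes use the same ingredients; yours has the small advantage that Proposition~\ref{prdem} already delivers attainment of the minimum over $S$, so the existence of $d_k$ with $\ld k f(\bar x;0,\dots,0;d_k)=0$ drops out immediately, and it also covers the case $k=1$ without a separate remark (Theorem~\ref{th2} is stated only for $n\ge 2$). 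One cosmetic point: the phrase ``the minimum being attained since $S$ is compact'' is slightly loose, since compactness alone does not force a minimum without lower semicontinuity; the attainment is precisely the content of Proposition~\ref{prdem}, which you already invoke.
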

\begin{proof}
We prove the equivalence between the claims a) and b).
Let $\bar x$ be an isolated minimizer of order $n$ and it is not an isolated minimizer of order $k$ for every $k<n$. Therefore, by Theorem \ref{th2} Conditions (\ref{4}) and (\ref{35}) hold. Since it is not an isolated minimizer of order $k$ for every $k<n$, then according to Theorem \ref{th2} for every $k<n$ there exists $d_k$ such that $\ld k f(\bar x;\underbrace{0,0,\dots ,0}_{(k-1)-times};d_k)=0$. 

The converse claim is also an easy  consequence of Theorem \ref{th2}. Conditions (\ref{4}) and (\ref{35}) imply that $\bar x$ is an isolated minimizer of order $n$. The condition that for every $k<n$ there exists $d_k\ne 0$ such that $\ld k f(\bar x;\underbrace{0,0,\dots ,0}_{(k-1)-times};d_k)=0$ implies that $\bar x$ is not an isolated minimizer of order $k$.

We prove the equivalence between the claims a) and c).
Let $\bar x$ be an isolated minimizer of order $n$ and it is not an isolated minimizer of order $k$ for every $k<n$. Hence $\bar x$ is a local minimizer. Then we obtain from the necessary optimality conditions (see proposition \ref{pr1}) that $f^{\downarrow}_k(\bar x)\ge 0$ for every $k=1$, $2,$ $\dots$, $n$. Taking into account that $\bar x$ is an isolated minimizer of order $n$, we conclude from Proposition \ref{pr4} that
$f^{\downarrow}_n(\bar x)>0$. Assume that $f^{\downarrow}_k(\bar x)>0$ for some $k<n$. Then by Proposition \ref{pr4} $\bar x$ is an isolated minimizer of order $k$ which is a contradiction. Therefore  $f^{\downarrow}_k(\bar x)=0$ for every $k<n$.

We prove the converse claim. Let $f^{\downarrow}_k(\bar x)=0$ for every $k<n$ and $f^{\downarrow}_n(\bar x)>0$. Therefore $\bar x$ is an isolated minimizer of order $n$. Suppose that there is an integer $k<n$ such that $\bar x$ is an isolated minimizer of order $k$. Then by Proposition \ref{pr4} $f^{\downarrow}_k(\bar x)>0$ which is a contradiction.
\end{proof}

\section{Comparison with some previous results}
\label{s7}

The lower Dini directional derivative (in short, Dini derivative) at the point $x\in\dom f$ in direction $u$ is defined by the following equality:
\[
\dini 1 f(x,u)=\liminf_{t\downarrow 0}\,t^{-1}[f(x+t u)-f(x)].
\]
The lower Dini directional derivative of order $n$ at the point $x\in\dom f$ in direction $u$ is defined as follows:
\[
\dini n f(x,u)=\liminf_{t\downarrow 0}\,n! t^{-n}[f(x+t u)-f(x)-\sum_{k=1}^{n-1}\dini k f(x,u)].
\]
It exists if all derivatives of lower order are finite.

\begin{proposition}
Let $n>1$ be a positive integer, $f$ be a proper extended real scalar function such the  Dini derivative of order $n$ and lower exist. Suppose that $x\in\dom f$ is a local minimizer. Then the following necessary conditions are satisfied:

\medskip
$\dini 1 f(x;u)\ge 0,\quad\forall u\in\E$\hfill {\rm (D$_1$)}

$\dini i f(x;u)=0,\; i<k\quad\Rightarrow\quad\dini k f(x;u)\ge 0$\hfill {\rm (D$_k$)}

\medskip\noindent
for all $k=2,3,\dots, n$.
\end{proposition}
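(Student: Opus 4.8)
\noindent\emph{Proof sketch.}
The plan is to mimic the proof of Theorem \ref{th1}, replacing the Hadamard derivatives by Dini ones; this is in fact simpler, because the Dini derivative tests a single fixed direction $u$ rather than a perturbed family $u^\pr\to u$. Since $x$ is a local minimizer, fix a neighborhood $N\ni x$ with $f(y)\ge f(x)$ for every $y\in N$. For an arbitrary $u\in\E$ the point $x+tu$ lies in $N$ as soon as $t>0$ is small enough, hence $f(x+tu)-f(x)\ge 0$ for all such $t$; dividing by $t>0$ and passing to the $\liminf$ as $t\downarrow 0$ gives $\dini 1 f(x;u)\ge 0$, which is (D$_1$).

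For (D$_k$) with $2\le k\le n$ I would argue under the hypothesis of the implication, namely $\dini i f(x;u)=0$ for every $i<k$. Under this hypothesis the correction terms built from $\dini 1 f(x;u),\dots,\dini{k-1} f(x;u)$ in the definition of $\dini k f(x;u)$ all vanish, so
\[
\dini k f(x;u)=\liminf_{t\downarrow 0}\,k!\,t^{-k}\,[f(x+tu)-f(x)].
\]
Exactly as before, $f(x+tu)-f(x)\ge 0$ for all sufficiently small $t>0$; dividing by $t^k>0$ and taking the $\liminf$ yields $\dini k f(x;u)\ge 0$. The assumption that the lower Dini derivatives of orders $1,\dots,n$ exist guarantees that the lower-order ones are finite and that every expression written above is a well-defined element of $\RR$, so nothing further is needed.

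The argument is essentially routine, and I do not expect a genuine obstacle; the one point worth flagging is that (D$_k$) is a conditional statement, and it is precisely the assumption $\dini i f(x;u)=0$ for $i<k$ that collapses the Taylor-type correction and reduces the order-$k$ derivative to $k!\,\liminf_{t\downarrow 0}t^{-k}[f(x+tu)-f(x)]$, whose nonnegativity is immediate from local minimality. In contrast with Theorem \ref{th4}, there is no passage to the unit sphere and no compactness argument, since conditions (D$_1$) and (D$_k$) are stated pointwise in the direction $u$. Thus the proposition is simply the Dini counterpart of Theorem \ref{th1}.
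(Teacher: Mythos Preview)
Your argument is correct. The paper, however, states this proposition without proof; it is presented as a standard fact in Section~\ref{s7} purely to set up the comparison with conditions (N$_k$) in Proposition~\ref{pr5}. So there is no proof in the paper to compare against.

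That said, your reasoning is exactly what one would expect and mirrors the structure of Theorem~\ref{th1}: local minimality forces $f(x+tu)-f(x)\ge 0$ for small $t>0$, and the conditional hypothesis $\dini i f(x;u)=0$ for $i<k$ kills the Taylor-type correction so that the order-$k$ Dini derivative reduces to $k!\,\liminf_{t\downarrow 0}t^{-k}[f(x+tu)-f(x)]\ge 0$. Your remark that no compactness or unit-sphere argument is needed (because (D$_k$) is pointwise in $u$) is also apt. Nothing is missing.
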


We compare these conditions with our necessary ones

\medskip
$\ld 1 f(x;u)\ge 0,$\hfill {\rm (N$_1$)}

$\ld k f(x;\underbrace{0,0,\dots ,0}_{(k-1)-times};u)\ge 0$,\hfill {\rm (N$_k$)}

\medskip
\noindent
for all $k=2,3,\dots, n$.

\begin{proposition}\label{pr5}
Let $f$ be a proper extended real scalar function such that all Dini derivatives of order $n$  and lower exist. Suppose that at least one of the conditions {\rm (D$_k$)}, $k=1,2,\dots, n$ is not satisfied at the point $x\in\dom f$. Then all derivatives in the conditions {\rm (N$_k$)}, $k=1,2,\dots, n$ exist, but at least one of the conditions {\rm (N$_k$)}, $k=1,2,\dots, n$ fails.
\end{proposition}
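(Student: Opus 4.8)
The plan is to compare the two families of derivatives directly, exploiting the fact that the Dini derivative uses only the radial limit $t\downarrow 0$ while the Hadamard derivative also varies the direction $u'\to u$. The first observation I would record is the elementary inequality
\[
\ld k f(x;\underbrace{0,\dots,0}_{(k-1)\text{-times}};u)\le\dini k f(x;u)
\]
whenever all lower-order Dini derivatives vanish (so that both sides make sense with the zero multilinear forms inserted). Indeed, restricting the liminf defining $\ld k f$ to the subfamily of curves $u'\equiv u$ can only increase it, and on that subfamily the expression $\Delta_k$ reduces exactly to $k!\,t^{-k}[f(x+tu)-f(x)]$, which is the quantity defining $\dini k f(x;u)$. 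One has to check that the hypotheses match up: if $\dini i f(x;u)=0$ for $i<k$ then in $\Delta_k$ the subtracted polynomial terms $\tfrac{t^i}{i!}x_i^*(u')\cdots(u')$ with $x_i^*=0$ contribute nothing, so the two expressions genuinely coincide on the radial subfamily.

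Next I would handle the bookkeeping of "which derivatives are defined." We are told all Dini derivatives up to order $n$ exist at $x$, which by the definition recalled just before the proposition means all lower-order Dini derivatives are finite. The necessary conditions (D$_1$)--(D$_n$) being the hypotheses of the previous proposition, the assumption of Proposition~\ref{pr5} is that some (D$_k$) fails at $x$; take the least such $k$. For $i<k$ the condition (D$_i$) holds, and moreover — since $k$ is least and the chain of implications in (D$_i$) is triggered — one deduces $\dini i f(x;u)=0$ for $i<k$ for the offending direction $u$ (for $i=1$ failure is just $\dini 1 f(x;u)<0$, handled separately below; for $i\ge 2$ the premise of (D$_i$) is the vanishing of still-lower derivatives, which propagates). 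This is exactly the situation in which the first-paragraph inequality applies, and simultaneously it guarantees that the subdifferentials $\lsubd i f(x;0,\dots,0)$ are nonempty with $0$ in them for $i<k$ — because $0\in\lsubd i f(x;0,\dots,0)$ is equivalent to $\ld i f(x;0,\dots,0;u)\ge 0$ for all $u$, and (as in the proof of Theorem~\ref{th1}) choosing $u'\equiv u$ again shows $\ld i f \le \dini i f = 0$ is compatible only if... — actually here I must be careful: I need $\ld i f(x;0,\dots,0;u)\ge 0$ for \emph{all} $u$, which does not follow from $\dini i f(x;u)=0$ for the single direction $u$. So the cleaner route is: the hypothesis "all (N$_k$) derivatives exist" in the conclusion should itself be derived, and for that I only need the lower-order \emph{Hadamard} subdifferentials to be nonempty; I will instead phrase the argument so that the chain of $0\in\lsubd i f$ is read off from the lower-order (D$_i$) holding at every direction, not just at $u$. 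The honest statement is: if (D$_i$) holds (for all directions) for $i<k$ then $\dini i f(x;v)\ge 0$ for all $v$, hence $\ld i f(x;0,\dots,0;v)\le\dini i f(x;v)$ does \emph{not} immediately give $0\in\lsubd i f$. The safe fix, and the one I would actually write, is to invoke that the offending direction $u$ is a \emph{local minimizer direction} only under extra care — so instead I retreat to: the paper only claims existence of the (N$_k$)-derivatives, and existence is automatic from Definition~\ref{def3} once the $x_i^*$ are fixed; the genuine content is the failure.

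The heart of the proof is therefore: from failure of (D$_k$) at $u$ — i.e.\ $\dini k f(x;u)<0$ in the first-order case, or $\dini k f(x;u)<0$ with all lower Dini derivatives zero in the higher-order case — the inequality $\ld k f(x;0,\dots,0;u)\le\dini k f(x;u)<0$ shows (N$_k$) fails at the same direction $u$. The main obstacle, as the previous paragraph signals, is the definitional hygiene: Definition~\ref{def3} requires the lower-order Hadamard subdifferentials to be nonempty and a choice $x_i^*$ fixed before $\ld n f$ has meaning, whereas the problem only supplies Dini data. I expect the resolution to be that when the \emph{least} failing index is $k$, all of (D$_1$),\dots,(D$_{k-1}$) hold at every direction, forcing $\dini i f(x;\cdot)\ge 0$ everywhere, and then a short argument (exactly parallel to the proof of Theorem~\ref{th1}, comparing $\ld i f$ with its radial restriction but now using positive homogeneity of degree $i$ and the fact that the only way the liminf over the richer Hadamard family can fail to be $\ge 0$ while staying $\le$ the Dini value... ) pins down $0\in\lsubd i f(x;0,\dots,0)$ for $i<k$, so that $\ld k f$ is well defined and the comparison inequality closes the proof. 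I would write this last step carefully, since it is the only place where "Hadamard versus Dini" does real work rather than being a one-line monotonicity remark.
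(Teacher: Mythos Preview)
Your key inequality
\[
\ld k f(x;\underbrace{0,\dots,0}_{k-1};u)\le\dini k f(x;u)
\]
(valid when all lower Dini derivatives at $u$ vanish, via restriction to the radial subfamily) is correct and is exactly what the paper uses. However, you correctly diagnose and then fail to resolve the well-definedness issue, and the resolutions you sketch do not work.

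Your final proposed fix --- that $(\mathrm{D}_1),\dots,(\mathrm{D}_{k-1})$ holding at every direction ``forces $\dini i f(x;\cdot)\ge 0$ everywhere'' and that this ``pins down $0\in\lsubd i f(x;0,\dots,0)$'' --- breaks at two points. First, $(\mathrm{D}_i)$ is an implication with a premise; if for some $v$ one has $\dini 1 f(x;v)>0$, then $(\mathrm{D}_i)$ says nothing whatsoever about $\dini i f(x;v)$. Second, and more fatally, even if you had $\dini i f(x;v)\ge 0$ for all $v$, the inequality goes the wrong way: $\ld i f\le\dini i f$, so nonnegativity of the Dini derivative does not yield nonnegativity of the Hadamard one, and hence does not give $0\in\lsubd i f$. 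No amount of positive homogeneity repairs this.

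The actual resolution, which you are missing, is a simple dichotomy that sidesteps the problem entirely. Having fixed the direction $u$ at which $(\mathrm{D}_k)$ fails (so $\dini i f(x;u)=0$ for $i<k$ and $\dini k f(x;u)<0$), argue level by level: at stage $i<k$, \emph{either} there exists some $v$ with $\ld i f(x;0,\dots,0;v)<0$, in which case $(\mathrm{N}_i)$ fails and the proof is finished; \emph{or} no such $v$ exists, in which case $\ld i f(x;0,\dots,0;\cdot)\ge 0$ on all of $\E$, i.e.\ $0\in\lsubd i f(x;0,\dots,0)$, and the derivative of order $i+1$ is well defined. If the second alternative occurs at every stage $i<k$, you reach stage $k$ with $\ld k f(x;0,\dots,0;u)$ defined, and your key inequality gives $\ld k f(x;0,\dots,0;u)\le\dini k f(x;u)<0$, so $(\mathrm{N}_k)$ fails. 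Either way some $(\mathrm{N}_i)$ fails, and the derivative witnessing the failure is defined precisely because all earlier alternatives went the second way. This is the paper's argument; the point you overlooked is that you never need to \emph{prove} $0\in\lsubd i f$ --- you only need to observe that its failure already finishes the proof.
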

\begin{proof}
Suppose that (D$_1$) fails. Therefore there exists a direction $u\in\E$ such that $\dini 1 f(x,u)<0$. It follows from here that
\[
\ld 1 f(x;u)\le\dini 1 f(x;u)<0
\]
and (N$_1$) is not satisfied.

Let us suppose that all conditions {\rm (D$_i$)}, $i<k$ are satisfied. Here $k$ is an arbitrary integer with $2\le k\le n$. We may suppose without loss of generality that {\rm (D$_k$)} fails. Therefore there exists $u\in\E$ such that
\[
\dini i f(x;u)=0,\; i<k\quad\textrm{and}\quad \dini k f(x;u)<0.
\]
We have  $\ld 1 f(x;u)\le\dini 1 f(x;u)=0$. If there is a $v\in\E$ with $\ld 1 f(x;v)<0$, then (N$_1$)  fails. Otherwise (N$_1$) holds,
$0\in\lsubd 1 f(x)$ and $\ld 2 f(x;0;v)$ is well defined for every $v\in\E$. We have
\[
\ld 2 f(x;0;u)\le\liminf_{t\downarrow 0}\, 2t^{-2}[f(x+tu)-f(x)]=\dini 2 f(x;u)=0.
\]
If there is $v\in\E$ such that $\ld 2 f(x;0;v)<0$, then (N$_2$) fails. Otherwise (N$_2$) holds, $0\in\lsubd 2 f(x;0)$ and $\ld 3 f(x;0,0;v)$ exists for every $v\in\E$. We continue in this way. At last, if there exists $v\in\E$ with $\ld {k-1} f(x;\underbrace{0,0,\dots ,0}_{(k-2)-times};u)<0$, then (N$_{k-1}$) fails. Otherwise $0\in\lsubd {k-1} f(x;\underbrace{0,0,\dots ,0}_{(k-2)})$ and $\ld k f(x;\underbrace{0,0,\dots ,0}_{(k-1)};v)$ is defined for every $v\in\E$. We have
\[
\ld k f(x;\underbrace{0,0,\dots ,0}_{(k-1)};u)\le\liminf_{t\downarrow 0}\, k!t^{-k}[f(x+tu)-f(x)]=\dini k f(x;u)<0,
\]
which implies that (N$_k$) is not satisfied. In the cases $k=2$ and $k=3$ the proof will be simpler.
\end{proof}
This result shows that our optimality conditions are preferable than the conditions in term of lower Dini derivatives, because if some point can be rejected as a potential candidate for a local minimum by Dini derivatives, then it  can be rejected by our derivatives. 

\begin{example}
Consider the function of two variables $f:\R^2\to\R$ defined by:
\[
f(x)=\left\{
\begin{array}{ll}
-x_2^n, \quad & \textrm{if}\quad x_2=x_1^2, \\
0, & \textrm{if}\quad x_2\ne x_1^2.
\end{array}\right.
\]
The point $\bar x=(0,0)$ is not a local minimizer. Let us calculate the Dini derivatives. We have
\[
\dini k f(\bar x,u)=0\quad\textrm{for all}\quad k=1,2,\dots,n,\quad\forall u\in\R^2.
\]
Therefore the Dini derivatives cannot reject $\bar x$ as possible minimizer. On the other hand
\[
\ld k f(x;\underbrace{0,0,\dots ,0}_{(k-1)};u)=0,\;\forall\; k<n,\;\forall u\in\R^2,\quad\ld n f(x;\underbrace{0,0,\dots ,0}_{(n-1)};u)=-n!u_2^n,
\]
where $u=(u_1,u_2)$. It is easy to see that our derivatives reject $\bar x$ as candidate for minimizer, because $\ld n f(x;\underbrace{0,0,\dots ,0}_{(n-1)};u)<0$ when $u_2>0$.
\end{example}

Ginchev \cite{gin02} has introduced the following directional derivatives of Hadamard type. Let be given an proper extended real function $\f$. The derivatives begin with the derivative of  order $0$:
\[
\gin 0 f(x;u):=\liminf_{t\downarrow 0,u^\pr\to u}\, f(x+t u^\pr).
\]
Let $n$ be a positive integer. Then the derivative of order $n$ at the point $x\in \dom f$ in direction $u\in\E$ is defined as follows: 
\[
\gin n f(x;u):=\liminf_{t\downarrow 0,u^\pr\to u}\,\frac{n!}{t^n}\,[f(x+t u^\pr)-\sum_{i=0}^{n-1}\frac{t^i}{i!}\gin i f(x;u)].
\]
In \cite[Theorem 1]{gin02} the author derived necessary optimality conditions of order $n$ for a local minimum under the assumption that the required derivatives exist. In \cite[Theorems 9 and 10]{gin02} are derived necessary and sufficient conditions for isolated local minimum of order $n$ ($n$ is positive integer). For every direction $u$, he consider the following conditions:

\medskip
$\gin 0 f(x;u)>f(x)$\hfill {\rm (G$_0$)}

$\gin 0 f(x;u)=f(x)$, $\gin 1 f(x,u)>0$\hfill {\rm (G$_1$)}

$\gin 0 f(x;u)=f(x)$, $\gin i f(x,u)=0$, $i<n$, $\gin n f(x,u)>0$\hfill {\rm (G$_n$)}

\medskip
We compare these conditions with our sufficient ones for isolated local minimum of order $n$:

\medskip
$0\in\lsubd 1 f(x)$, \hfill {\rm (S$_1$)}

$0\in\lsubd k f(x;\underbrace{0,0,\dots ,0)}_{(k-1)-times}$ for all $k=2,3,\dots,n-1$ \hfill {\rm (S$_k$)}

$\ld {n} f(x;\underbrace{0,0,\dots ,0}_{(n-1)-times};u)>0$, \hfill {\rm (S$_n$)}

\medskip
The following lemma is well-known.
\begin{lemma}\label{lema1}
Let $a_n$ and $b_n$ be two infinite sequences such that $a_n\ge 0$, $b_n\ge 0$. If there exists the limit $\lim_{n\to+\infty} a_n$, then
\[
\liminf_{n\to +\infty}(a_n b_n)=\lim_{n\to +\infty} a_n\; \liminf_{n\to +\infty} b_n.
\]
\end{lemma}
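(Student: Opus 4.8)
The plan is to abbreviate $a:=\lim_{n\to+\infty}a_n\ge0$ and $\ell:=\liminf_{n\to+\infty}b_n\in[0,+\infty]$ and to prove the two inequalities $\liminf_{n\to+\infty}(a_nb_n)\le a\ell$ and $\liminf_{n\to+\infty}(a_nb_n)\ge a\ell$ separately (the product $a\ell$ being read with the usual convention $0\cdot(+\infty)=0$). Combining them gives the claimed identity.

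For the inequality $\le$, I would extract a subsequence $(n_k)$ with $b_{n_k}\to\ell$, which is possible by the definition of the lower limit, including when $\ell=+\infty$. Because the whole sequence $(a_n)$ converges, $a_{n_k}\to a$, hence $a_{n_k}b_{n_k}\to a\ell$, and therefore $\liminf_{n\to+\infty}(a_nb_n)\le\lim_{k\to+\infty}a_{n_k}b_{n_k}=a\ell$.

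For the inequality $\ge$, I would distinguish cases. If $a>0$: given $\eps\in(0,a)$ there is an index $N$ with $a_n\ge a-\eps>0$ for $n\ge N$, and, after enlarging $N$, also $b_n\ge\ell-\eps$ for $n\ge N$ if $\ell<+\infty$, or $b_n\ge M$ for $n\ge N$ if $\ell=+\infty$, where $M>0$ is chosen in advance. Since $a_n,b_n\ge0$, multiplying these bounds gives $a_nb_n\ge(a-\eps)\max\{\ell-\eps,\,0\}$ (respectively $a_nb_n\ge(a-\eps)M$) for $n\ge N$; taking the lower limit and then letting $\eps\downarrow0$ (respectively $M\to+\infty$) yields $\liminf_{n\to+\infty}(a_nb_n)\ge a\ell$. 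If $a=0$ (and $\ell<+\infty$, the only situation occurring in the applications), then $a\ell=0\le\liminf_{n\to+\infty}(a_nb_n)$ trivially because $a_nb_n\ge0$, while the inequality $\le$ already supplies $\liminf_{n\to+\infty}(a_nb_n)\le0$ through a subsequence with $a_{n_k}\to0$ and $b_{n_k}\to\ell<+\infty$, so equality holds.

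There is no serious obstacle here: the statement is elementary and the argument is the standard subsequence manipulation. The only point deserving attention is the bookkeeping of the extended-real case $\ell=+\infty$ and of the degenerate case $a=0$ under the convention $0\cdot(+\infty)=0$ — and in every use of the lemma below the factor $a_n$ is a positive power of $\norm{u^\pr}$ tending to $\norm{u}>0$, so in fact only the case $a>0$ is ever needed.
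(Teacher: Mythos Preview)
Your argument is correct and is exactly the standard subsequence proof of this classical fact. The paper does not give its own proof at all: it merely introduces the lemma with the phrase ``The following lemma is well-known'' and moves on, so there is no approach in the paper to compare against. Your write-up therefore actually supplies more than the paper does.

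One small remark on the degenerate case you already flagged. When $a=0$ and $\ell=+\infty$ the identity genuinely fails (e.g.\ $a_n=1/n$, $b_n=n$ gives $\liminf a_nb_n=1$ while $a\,\ell=0$ under the convention $0\cdot(+\infty)=0$), so the lemma as stated in the paper is tacitly meant to exclude that situation. You handle this honestly by restricting to $a>0$ in the applications, and indeed the only use of the lemma in the paper (inside the proof of Proposition~\ref{pr6}) has $a_n$ of the form $c\,t^{-j}$ or a ratio of positive powers with a strictly positive limit, so the issue never arises.
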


\begin{proposition}\label{pr6}
Let $f$ be a proper extended real scalar function which is lower semicontinuous at the point $x\in\dom f$. Let  the required Ginchev's derivatives in Conditions {\rm (G$_k$)}, $k=0,1,\dots,n$ exist. Suppose that for every direction $u\in\E$, $u\ne 0$ is satisfied  at least one of conditions {\rm (G$_k$)}, $k=0,1,\dots,n$ where $n$ is a fixed positive integer. Suppose additionally that $\gin i f(x;0)=0$ for all $i=1,2,\dots,n$. Then all required derivatives in conditions {\rm (S$_k$)} exist and all the conditions {\rm (S$_k$)}, $k=1,2,\dots, n$ are satisfied for every $u\in\E$, $u\ne 0$. 
\end{proposition}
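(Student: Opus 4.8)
The plan is to verify conditions (S$_1$),\dots,(S$_n$) by induction on the order, exploiting at each step the case split supplied by the hypothesis: for a fixed $u\in\E$ with $u\ne 0$, one of the conditions (G$_0$),\dots,(G$_n$) holds, and in each case the lower Hadamard derivative $\ld m f(x;0,\dots,0;u)$ (with $m-1$ zero arguments, in the sense of Definition~\ref{def3}) can be read off from Ginchev's derivatives. The basic identity I would use repeatedly is this: if $\gin 0 f(x;v)=f(x)$ and $\gin i f(x;v)=0$ for $i=1,\dots,m-1$, then the truncation sum $\sum_{i=0}^{m-1}(t^i/i!)\gin i f(x;v)$ collapses to $f(x)$, so the two defining $\liminf$'s coincide and $\ld m f(x;0,\dots,0;v)=\gin m f(x;v)$ --- legitimate provided the left-hand side is meaningful, i.e.\ provided $0\in\lsubd i f(x;0,\dots,0)$ for all $i<m$.

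First I would settle the direction $0$. Lower semicontinuity of $f$ at $x$ gives $\gin 0 f(x;0)=\liminf_{t\downarrow 0,\,u'\to 0}f(x+tu')\ge f(x)$, while the choice $u'\equiv 0$ gives the opposite inequality; hence $\gin 0 f(x;0)=f(x)$. Together with the hypothesis $\gin i f(x;0)=0$, $i=1,\dots,n$, the identity above then yields $\ld m f(x;0,\dots,0;0)=\gin m f(x;0)=0$ for every $m\le n$. I would also record a bootstrapping remark: if $\ld j f(x;0,\dots,0;v)$ is well defined and strictly positive, then $f(x+tv')-f(x)\ge c\,t^j$ for all small $t>0$ and all $v'$ near $v$, with some $c>0$ (any constant works if that value is $+\infty$); therefore $\ld i f(x;0,\dots,0;v)=+\infty$ for every $i$ with $j<i\le n$ for which the derivative is defined.

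Now the induction itself. For $m=1,\dots,n-1$, and assuming $0\in\lsubd i f(x;0,\dots,0)$ for every $i<m$ (vacuous when $m=1$), I claim $\ld m f(x;0,\dots,0;v)\ge 0$ for every $v\in\E$, which gives $0\in\lsubd m f(x;0,\dots,0)$ --- in particular nonempty --- feeding the next step. For $v=0$ this is the previous paragraph. For $v\ne 0$ I split according to which of (G$_0$),\dots,(G$_n$) holds for $v$: condition (G$_0$) makes $\gin 0 f(x;v)>f(x)$, forcing $\ld m f(x;0,\dots,0;v)=+\infty$; condition (G$_j$) with $j\ge m$ gives, by the identity, $\ld m f(x;0,\dots,0;v)=\gin m f(x;v)$, which equals $0$ if $j>m$ and equals $\gin j f(x;v)>0$ if $j=m$; condition (G$_j$) with $j<m$ gives $\ld j f(x;0,\dots,0;v)=\gin j f(x;v)>0$ (legitimate since $j<m$), hence $\ld m f(x;0,\dots,0;v)=+\infty$ by the bootstrapping remark. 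In every case $\ld m f(x;0,\dots,0;v)\ge 0$. This proves (S$_1$),\dots,(S$_{n-1}$), so $\ld n f(x;0,\dots,0;u)$ is now well defined with zero arguments for every $u$. Running the same case analysis with $m=n$ on an arbitrary $u\ne 0$: (G$_0$) yields $+\infty$; (G$_n$) yields $\gin n f(x;u)>0$ by the identity; (G$_j$) with $1\le j<n$ yields $\ld j f(x;0,\dots,0;u)=\gin j f(x;u)>0$ and hence $+\infty$ by bootstrapping. Thus $\ld n f(x;0,\dots,0;u)>0$ for all $u\ne 0$, which is (S$_n$), and since each $\lsubd k f(x;0,\dots,0)$ has been shown to contain $0$, all the derivatives occurring in (S$_1$),\dots,(S$_n$) exist.

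The bulk of the work is purely bookkeeping of which terms of which truncation sum vanish. The one point that needs care is the circular-looking well-definedness issue: every time $\ld m f(x;0,\dots,0;\cdot)$ is written, the lower subdifferentials of orders $1,\dots,m-1$ must already have been shown to contain $0$, which is exactly why the argument must be organised as an induction on the order $m$. The only steps I expect to be non-mechanical are establishing the identity relating $\ld m$ to $\gin m$ under the vanishing hypotheses, and checking that lower semicontinuity is precisely what fixes $\gin 0 f(x;0)=f(x)$ (the normalization $\gin i f(x;0)=0$ alone does not give it); the rest is a term-by-term translation between the two families of derivatives.
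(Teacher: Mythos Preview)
Your argument is correct and follows the same overall strategy as the paper's proof: induction on the order $m$, a case split according to which condition $(G_j)$ holds for the given nonzero direction, and the identity $\ld m f(x;0,\dots,0;v)=\gin m f(x;v)$ once $\gin 0 f(x;v)=f(x)$ and the lower-order Ginchev derivatives vanish. Your ``bootstrapping remark'' plays exactly the role of the paper's appeal to Lemma~\ref{lema1} and is the same computation written out.

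Your version is in fact more careful on one point. The paper asserts that lower semicontinuity forces $\gin 0 f(x;u)=f(x)$ for every $u$, and hence that $(G_0)$ can never occur; but lower semicontinuity only yields $\gin 0 f(x;u)\ge f(x)$, and $(G_0)$ is possible (take $f(y)=1$ for $y\ne x$, $f(x)=0$). You handle this case directly by observing that $\gin 0 f(x;v)>f(x)$ forces $\ld m f(x;0,\dots,0;v)=+\infty$, which is the correct patch; the paper's dismissal of $(G_0)$ is a small gap that your treatment closes.
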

\begin{proof}
The case $n=1$ is trivial. 

Let $n>1$. Suppose that for every direction $u\ne 0$ at least one of the conditions {\rm (G$_k$)}, $k=0,1,\dots,n$ holds. By lower semicontinuity of $f$ we conclude that $\gin 0 f(x;u)=f(x)$. Therefore {\rm (G$_0$)} cannot be satisfied. We prove that all the conditions {\rm (S$_k$)}, $k=1,2,\dots, n$ are satisfied. First, we prove $(S_1)$. We conclude from {\rm (G$_k$)}, $k=1,2,\dots,n$ and by $\gin 1 f(x;0)=0$ that $\gin 1 f(x,u)\ge 0$ for every direction $u\in\E$, which implies that {\rm (S$_1$)} holds. 

Suppose that $k$ is any positive integer such that $k\le n$ and the conditions $(S_1)$, $(S_2)$, $\dots$ , $(S_{k-1})$ are proved. We prove $(S_k)$. It follows from $(S_{k-1})$ that 
the derivative $\ld  {k} f(x;\underbrace{0,0,\dots ,0}_{(k-1)-times};u)$ is well defined for every $u\in\E$. Let $u\in\E$ be an arbitrary direction such that $u\ne 0$. By conditions $(G_k)$ there exists a positive integer $m$ such that
\[
\gin i f(x;u)=0\quad \textrm{for all}\quad i<m,\quad \gin m f(x;u)>0.
\]

Consider several cases:

1) Let $m<k$. Therefore $\ld m f(x;\underbrace{0,0,\dots ,0}_{(m-1)-times};u)=\gin m f(x;u)>0$. Then, by Lemma \ref{lema1}, $\ld  {k} f(x;\underbrace{0,0,\dots ,0}_{(k-1)-times};u)>0$.

2) Let $m=k$. Therefore $\ld k f(x;\underbrace{0,0,\dots ,0}_{(k-1)-times};u)=\gin k f(x;u)>0$. 

3) Let $m>k$. Therefore $\ld k f(x;\underbrace{0,0,\dots ,0}_{(k-1)-times};u)=\gin k f(x;u)=0$.

In all cases $(S_k)$ is satisfied, because $\ld k f(x;\underbrace{0,0,\dots ,0}_{(k-1)-times};0)=\gin k f(x;0)=0$.

In the  case  when $k=n$ the third case $m>k$ is impossible. Therefore, $(S_n)$ holds.
\end{proof}

\begin{remark}
We suppose in the claim that $f$ is lower semicontinuous at $x$, but every function is lower semicontinuous at the points which are local minimizers.

The assumption $\gin i f(x;0)=0$ for all $i=1,2,\dots,n$ is very natural. Indeed, by \cite[Theorem 10]{gin02}, the point $x$, which satisfies the hypothesis of the theorem, is an isolated local minimizer. On the other hand, by the necessary conditions \cite[Theorem 1]{gin02} it has to be satisfied the following inequality: $\gin i f(x;u)\ge 0$ for all $i=1,2,\dots,n$ for every $u\in\E$. In particular, $\gin i f(x;0)\ge 0$ for all $i=1,2,\dots,n$. Really, $\gin i f(x;0)$ cannot be strictly positive. Suppose that for some index $p$ we have 
\[
\gin 0 f(x;0)=f(x),\quad  \gin i f(x;0)=0,\; 0<i<p.
\]
Then
\[
\gin p f(x;0)=\liminf_{t\downarrow 0,u^\pr\to 0}\, p![f(x+t u^\pr)-f(x)]/(t^p)\le\liminf_{t\downarrow 0}\, p![f(x+t.0)-f(x)]/(t^p)=0.
\]
Therefore $\gin p f(x;0)=0$.
\end{remark}

\begin{corollary}
Let $f$ be a proper extended real scalar function which is lower semicontinuous at the point $x\in\dom f$ and the required Ginchev's derivatives exist and for every direction $u\in\E$ is satisfied  at least one of conditions {\rm (G$_k$)}, $k=0,1,\dots,n$ if and only if for every direction $u$ are satisfied the Conditions {\rm (S$_k$)}, $k=1,2,\dots, n$. 
\end{corollary}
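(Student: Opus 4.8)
The plan is to obtain this corollary by combining Proposition~\ref{pr6} with its converse; as in Theorem~\ref{th2} we take $n\ge 2$, and directions are understood to range over $u\ne 0$, as in Proposition~\ref{pr6}. For the implication in which the Ginchev conditions yield the conditions (S$_k$), the only thing to add to Proposition~\ref{pr6} is a check that its side hypothesis $\gin i f(x;0)=0$, $i=1,\dots,n$, is automatic here. This I would get from the constant path $u^\pr\equiv 0$: it gives $\gin 0 f(x;0)\le\liminf_{t\downarrow 0}f(x+t\cdot 0)=f(x)$, while lower semicontinuity at $x$ forces $\gin 0 f(x;0)\ge f(x)$, so $\gin 0 f(x;0)=f(x)$; substituting this into the defining liminf and iterating along the same constant path, exactly as in the Remark following Proposition~\ref{pr6}, gives $\gin i f(x;0)=0$ for $i=1,\dots,n$. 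Proposition~\ref{pr6} then yields (S$_1$),\dots,(S$_n$) at every $u\ne 0$.

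For the reverse implication I would fix an arbitrary direction $u\ne 0$, assume (S$_1$),\dots,(S$_n$) are in force, and exhibit an index $k\in\{0,1,\dots,n\}$ for which (G$_k$) holds at $u$. Lower semicontinuity gives $\gin 0 f(x;u)=\liminf_{t\downarrow 0,\,u^\pr\to u}f(x+tu^\pr)\ge f(x)$; if this inequality is strict then (G$_0$) holds, so assume $\gin 0 f(x;u)=f(x)$. The engine of the argument is the identity
\[
\gin m f(x;u)=\ld m f(x;\underbrace{0,0,\dots ,0}_{(m-1)-times};u),
\]
valid whenever $\gin 0 f(x;u)=f(x)$ and $\gin i f(x;u)=0$ for $1\le i\le m-1$: under these equalities the telescoping sum in Ginchev's definition collapses to $f(x+tu^\pr)-f(x)$, which is precisely the bracketed expression in Definition~\ref{def3} when every entry $x^*_i$ is chosen to be $0$. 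Here one uses that (S$_1$),\dots,(S$_{m-1}$) are exactly the inclusions $0\in\lsubd i f(x;0,\dots,0)$, $i<m$, so the right-hand side is a well-defined element of $\bar\R$.

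The cascade then proceeds as follows. By (S$_1$), $\gin 1 f(x;u)=\ld 1 f(x;u)\ge 0$; if it is positive then (G$_1$) holds, and if it is $0$ we record $\gin 1 f(x;u)=0$ and continue. Inductively, if $\gin i f(x;u)=0$ for all $i<k$ with $k\le n$, the identity together with (S$_k$) gives $\gin k f(x;u)=\ld k f(x;0,\dots,0;u)\ge 0$; a strict inequality produces (G$_k$) (the required vanishing of the lower-order Ginchev derivatives being exactly what has been accumulated), while equality lets the induction proceed. The process cannot survive past stage $n$: because $u\ne 0$, condition (S$_n$) forces $\gin n f(x;u)=\ld n f(x;0,\dots,0;u)>0$, so (G$_n$) holds. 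Hence at every $u\ne 0$ one of (G$_0$),\dots,(G$_n$) is satisfied, which is the missing implication.

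I expect the difficulty to be bookkeeping rather than conceptual. One must be careful that the vanishing of all Ginchev derivatives of order below $m$ is used inside a single liminf over the \emph{same} index set $\{t\downarrow 0,\ u^\pr\to u\}$, so that the telescoping collapse is legitimate, and that the chain of inclusions $0\in\lsubd i f(x;0,\dots,0)$ --- supplied one step at a time by (S$_1$),\dots,(S$_{m-1}$) --- is in hand before $\ld m f(x;0,\dots,0;u)$ is written down. The possibility that some $\ld m f(x;0,\dots,0;u)$ equals $+\infty$ is harmless, since then the corresponding (G$_m$) holds trivially.
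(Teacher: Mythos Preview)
Your proof is correct. The forward direction matches the paper's: you invoke Proposition~\ref{pr6} after verifying its side hypothesis $\gin i f(x;0)=0$ via the argument in the Remark following it, which is exactly what the paper leaves implicit.

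For the reverse direction, however, you take a genuinely different route. The paper's one-line proof goes through the isolated-local-minimizer characterization: conditions (S$_k$) force $x$ to be an isolated local minimizer of order $n$ by Theorem~\ref{th2}, and then Theorem~9 of \cite{gin02} (the necessary-conditions half of Ginchev's characterization) recovers the (G$_k$) alternative at every direction. You instead work directly with the two families of derivatives, exploiting the identity $\gin m f(x;u)=\ld m f(x;0,\dots,0;u)$ valid once $\gin 0 f(x;u)=f(x)$ and the lower-order Ginchev derivatives vanish, then cascading through (S$_1$),\dots,(S$_n$) until (S$_n$) forces strict positivity. Your argument is self-contained---it does not call on the external result from \cite{gin02}---and it makes the pointwise correspondence between the two derivative hierarchies transparent. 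The paper's argument is shorter and conceptually cleaner in that it shows both condition sets characterize the same geometric object, but it is less elementary since it routes through a result outside the paper.
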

\begin{proof}
The claim is a consequence of the proposition, Theorem \ref{th2}, and Theorem 9 in Ref. \cite{gin02}.
\end{proof}


\begin{thebibliography}{00}

\bibitem{atf87} Alekseev V.M., Tikhimirov V.M., Fomin S.V. Optimal Control. New York: Plenum Publishing Corporation; 1987.

\bibitem{aub90} Aubin J.-P., Frankowska H. Set-Valued Analysis. Boston: Birkh\"auser; 1990.

\bibitem{aus84} Auslender  A. Stability in mathematical programming with nondifferentiable data. SIAM J. Control Optim. 1984;22:239--254.

\bibitem{bp04} Bedna\v r\'ik D., Pastor K. Elimination of strict convergence in optimization. SIAM J. Control Optim. 2004;43:1063--1077

\bibitem{pas08} Bedna\v r\'ik D., Pastor K. On second-order conditions in unconstrained optimization. Math. Program. Ser. A.  2008;11:283--291.

\bibitem{bt85} Ben-Tal  A., Zowe  J. Directional derivatives in nonsmooth optimization. J. Optim. Theory Appl. 1985;47:483--490.
  
\bibitem{cha94} Chan W.L., Huang  L.R., Ng K.F.  On generalized second-order derivatives and Taylor expansion in nonsmooth optimization, SIAM J. Control Optim. 1994;32:591--611.

\bibitem{cha87} Chaney  R.W. Second-order necessary conditions in constrained semismooth optimization. SIAM J. Control Optim. 1987;25:1072--1081.

\bibitem{cha88} Chaney R.W. Second-order necessary conditions in semismooth optimization. Math. Program. 1988;40:95--109.


\bibitem{cc90} Cominetti R., Correa R. A generalized second-order derivative in nonsmooth optimization. SIAM J. Control Optim. 1990;28:789--809.

\bibitem{dem00} Demyanov V.F. Conditions for an extremum in metric spaces. J. Global Optim. 2000;17:55--63.

\bibitem{gin02} Ginchev I. Higher order optimality conditions in nonsmooth optimization. Optimization. 2002;51:47--72.

\bibitem{ggr06} Ginchev I., Guerraggio A., Rocca M. From scalar to vector optimization. Appl. Math. 2006;51:5--36.

\bibitem{ban01} Ginchev I., Ivanov V.I. Higher order directional derivatives for nonsmooth functions. C. R. Acad. Bulgare Sci. 2001;54:33--38.

\bibitem{han81} Hanson M.A.  On sufficiency of the Kuhn-Tucker conditions. J. Math. Anal. Appl. 1981;80:545--550.

\bibitem{hes75} Hestenes M.R. Optimization Theory. New York: John Wiley \& Sons; 1975.

\bibitem{husn84} Hiriart-Urruty J.-B., Strodiot J.J., Nguyen V.H. Generalized Hessian matrix and second-order optimality conditions for problems with C$^{1,1}$ data.  Appl. Math. Optim. 1984;11:169--180.

\bibitem{hof78} Hoffmann K.H., Kornstaedt H.J. Higher-order necessary conditions in abstract mathematical programming. J. Optim. Theory Appl. 1978;26:533--568.

\bibitem{hua94} Huang L.R., Ng K.F. Second-order necessary and sufficient conditions in nonsmooth optimization. Math. Program. 1994;66:379--402.



\bibitem{applanal-1} Ivanov V.I. Higher-order invex functions and higher-order pseudoinvex ones. Appl. Anal. 2013;92:2152--2167.

\bibitem{applanal-2} Ivanov V.I. Higher-order optimality conditions for inequality-constrained problems. Appl. Anal. 2013;92:2600--2617.

\bibitem{NA} Ivanov V.I. Second-order optimality conditions with arbitrary nondifferentiable function in scalar and vector optimization. Nonlinear Anal. TMA 2015;125:270--289.


\bibitem{jl98} Jeyakumar V., Luc D.T. Approximate Jacobian matrices for nonsmooth continuous maps and C$^1$ optimization. SIAM J. Control Optim. 1998;36:1815--1832.



\bibitem{lin82} Linneman A. Higher-order necessary conditions for infinite and semi-infinite optimization. J. Optim. Theory Appl. 1982;38:483--511.

\bibitem{muk12} Mukhopadhyay S.N. Higher Order Derivatives. New York: Chapman \& Hall/CRC Press, Monographs and Surveys in Pure and Applied Mathematics, 2012;144.

\bibitem{pal91} Pallaschke D., Recht P., Urbanski R. Generalized derivatives for non-smooth functions. Comment. Math. Prace Mat. 1991;31:97--114.


\bibitem{roc88} Rockafellar R.T. First-order and second-order epi-differentiability in nonlinear programming. Trans. Amer. Math. Soc. 1988;307:75--108.

\bibitem{roc89} Rockafellar R.T. Second-order optimality conditions in nonlinear programming obtained by way of epi-derivatives. Math. Oper. Res. 1989;14:462--484.

\bibitem{sad94} Sadygov M.A. Higher-order necessary optimality conditions for nonsmooth optimization problems. Economics and Mathematical Methods. 1994;30:148--160 (in Russian).

\bibitem{stu86} Studniarski M. Necessary and sufficient conditions for isolated local minima of nonsmooth functions. SIAM J. Control Optim. 1986;24:1044--1049.

\bibitem{war94} Ward D.E. Characterizations of strict local minima and necessary conditions for weak sharp minima. J. Optim. Theory Appl. 1994;80:551--571.

\bibitem{yan96} Yang X.Q. On second-order directional derivatives. Nonlinear Anal. 1996;26:130--137.

\bibitem{yan99} Yang X.Q. On relations and applications of generalized second-order directional derivatives. Nonlinear Anal. 1999;36:595--614.

\bibitem{yj92} Yang X.Q., Jeyakumar V. Generalized second-order directional derivatives and optimization with C$^{1,1}$ functions. Optimization. 1992;26:165--185.

\end{thebibliography}
\end{document}